\newtheorem{theorem}{Theorem}[section]
\newtheorem{lemma}{Lemma}[section]
\newtheorem{remark}{Remark}
\theoremstyle{remark}
\newcommand{\eps}{\varepsilon}
\newcommand{\R}{\mathbb R}
\newcommand{\iR}{\int_{t=-R^2}}
\newcommand{\loc}{\mbox{\scriptsize{loc}}}
\newcommand{\supp}{\mbox{supp}}
\numberwithin{equation}{section}
\title{Regularity and long-time behavior of nonlocal heat flows}
\author{Stanley Snelson}
\address{Department of Mathematics, Courant Institute of Mathematical Sciences, New York University, 251 Mercer St., New York, NY 10012}
\email{snelson@cims.nyu,edu}
\date{March 14, 2015}
\subjclass[2010]{Primary 35K55, Secondary 49N60, 35B40, 35B25}
\keywords{Nonlocal heat equation, singularly perturbed parabolic system, Almgren monotonicity, regularity of free interfaces}
\begin{document}

\begin{abstract}
This article considers nonlocal heat flows into a singular target space. The problem is the parabolic analogue of a stationary problem that arises as the limit of a singularly perturbed elliptic system. It also provides a gradient flow approach to an optimal eigenvalue partition problem that, in light of the work of Caffarelli and Lin, is equivalent to a constrained harmonic mapping problem. In particular, we show that weak solutions of our heat equation converge as time approaches infinity to stationary solutions of this mapping problem. For these weak solutions, we also prove Lipschitz continuity and regularity of free interfaces, making use of a parabolic Almgren-type monotonicity formula.
\end{abstract}
\maketitle

\section{Introduction}\label{intro}
We consider a parabolic system motivated by the following problem, which arose in the study of shape optimization:
\begin{enumerate}
\item[\textbf{(P)}]\emph{Let $\Omega$ be a smooth, bounded domain in $\R^n$, and let $m\geq1$ be a positive integer. One seeks a partition of $\Omega$ into mutually disjoint subsets $\Omega_j$, $j=1,2,\ldots, m$, such that $\bar \Omega = \cup_{j=1}^m \bar \Omega_j$, and $\sum_{j=1}^m \lambda_1(\Omega_j)$ is minimized over all possible partitions of $\Omega$. Here $\lambda_1(\Omega_j)$ is the first Dirichlet eigenvalue of the Laplacian $\Delta$ on $\Omega_j$.}
\end{enumerate}
Shape optimization problems arise naturally in the field of structural design, when one seeks to optimize the design of a system governed by partial differential equations. Problem (P) has been studied by many authors; see for example \cite{BD, S, BZ, BBH} and the references in \cite{BBH}. Problem (P) also belongs to a broader class of partition problems that have attracted interest due to connections with spectral theory, see for example \cite{H, HHT, HHT2}.

To prove the existence of a minimizing partition, one must first define a class of admissible subsets of $\Omega$ so that the Dirichlet eigenvalues can be properly defined. Another more difficult issue is to introduce a topology on the space of partitions into admissible subsets so that the functional $\sum_{j=1}^m \lambda_1(\Omega_j)$ possesses an absolute minimum, and that the minimizer is a suitable partition of $\Omega$ into admissible subsets. An existence theorem for problem (P) was first established by Bucur-Buttazzo-Henrot \cite{BBH} in the context of so-called ``quasi-open'' partitions. Later, the work of Conti-Terracini-Verzini \cite{CTV0, CTVA, CTV3}, which considered (P) and various generalizations, developed in particular an existence proof for open partitions solving (P), making use of an equivalence between (P) and a constrained energy minimization problem for functions on $\Omega$. A similar equivalence was proven by Caffarelli and Lin \cite{CL2} for quasi-open partitions. We now describe their formulation of this equivalence, which is essential for the present study. Consider the constrained harmonic mapping problem
\begin{equation}\tag{\textbf{P*}}
\min \left\{ \int_\Omega |\nabla v|^2\,dx \,\Big|\, v\in H_0^1(\Omega,\Sigma), \int_\Omega v_j^2\,dx = 1 \mbox{ for } j = 1,\ldots,m \right\}
\end{equation}
where $\Sigma$ is the singular space
\[\Sigma = \left\{y\in\R^m\,\Big|\,\sum_{j\neq k} y_j^2y_k^2=0, y_k\geq 0 \text{ for } k = 1,\ldots,m\right\},\]
a union of $m$ perpendicular line segments meeting at the origin. Problems (P) and (P*) are equivalent in the following sense: there exists a solution $u$ of problem (P*) such that $u$ is Lipschitz in $\bar\Omega$, and such that the open sets $\Omega_j = \{x\in\Omega | u_j(x)>0\}$ for $j=1,\ldots,m$ form a partition of $\Omega$. That is, $\bar \Omega = \cup_{j=1}^m \bar\Omega_j$, and $\partial(\bar\Omega_j \setminus \Omega_j)$ are $C^{2,\beta}$ in $\Omega$ away from a relatively closed singular subset of $\Omega$, which is of Hausdorff dimension at most $n-2$ for $j=1,\ldots,m$. Furthermore, the partition of $\Omega$ given by $A_1=\bar\Omega_1$, $A_2=\bar\Omega_2\setminus A_1, \ldots,A_m = \bar\Omega_m\setminus\cup_{j=1}^{m-1} A_j$ is admissible in the sense defined in \cite{BBH}, and this partition solves problem (P). Conversely, for an optimal partition $A_1,\ldots,A_m$, let $u_j$ be the normalized eigenfunctions on $A_j$:
\[ \begin{cases}
\Delta u_j + \lambda_1(A_j) u_j = 0 &\mbox{in } \mathring A_j,\\
u_j=0 &\mbox{on } \bar\Omega\setminus \mathring A_j,
\end{cases}\]
with $u_j\in H_0^1(\Omega)$, $u_j\geq 0$, and $\int_\Omega|u_j|^2\,dx=1$. Then $u=(u_1,\ldots,u_m):\Omega\rightarrow\Sigma$ solves problem (P*). This equivalence leads to a proof of the existence of an optimal partition (a solution of problem (P)), see \cite{CL2} for the details.

Now that the optimal partition problem has been reformulated in a variational setting, it is natural to study the corresponding gradient flow. Because of the constraint $\int_\Omega |u_j|^2 = 1$ in problem (P*), the heat flow we are interested in should preserve the $L^2$ norm of each component. Another hint about the proper way to formulate a gradient flow corresponding to problem (P*) is provided by the body of work relating problems such as (P) to limiting problems of singularly perturbed elliptic systems (see \cite{CL3, CR, CLLL}). In particular, Conti-Terracini-Verzini \cite{CTV0, CTVA, CTV, CTV2} related singularly perturbed systems to optimal partition problems for nonlinear eigenvalues and Nehari's problem, and established Lipschitz continuity of the limiting solutions, as well as regularity of the free interfaces in two dimensions. Caffarelli-Lin \cite{CL3} studied the minimization problem
\begin{equation}\label{eq:stat}
\min \left\{ \int_\Omega \left[|\nabla u|^2+2F^\eps(u)\right] dx \,\Big|\, u\in H_0^1(\Omega,\R^m), u|_{\partial\Omega} = f\in\Sigma, f_j\geq 0\right\}
\end{equation}
where $F^\eps(u) = \sum_{1\leq i<j\leq m} \eps^{-2}(u_i)^2(u_j)^2$. In \cite{CL3}, the existence of minimizers $u^\eps$ of (\ref{eq:stat}) was established, and it was found that as $\eps\rightarrow 0$, the components of the limit of these minimizers are harmonic in the interiors of their supports, and that the supports become disjoint in the limit. (In particular, the limit maps into the space $\Sigma$ defined above. Note that $\Sigma=(F^\eps)^{-1}(0)\cap\{y_j\geq 0,j=1,\ldots,m\}$.) Lipschitz regularity of the limiting solution and free interface regularity for arbitrary target dimension were also established in \cite{CL3}, and problem (P) was seen as a special case of a larger class of limiting systems that also includes (\ref{eq:stat}). (Later, non-minimizing critical points of (\ref{eq:stat}) were analyzed by Tavares-Terracini \cite{TT} and Dancer-Wang-Zhang \cite{DWZ3}.)

It is therefore natural to look for a heat flow that preserves the $L^2$ energy of solutions and that arises as a limit to a singularly perturbed system analogous to the stationary problem (\ref{eq:stat}). In this regard, Caffarelli and Lin \cite{CL} considered the following nonlocal, nonlinear heat flow:
\begin{equation}\label{eq:epsilon}
\begin{cases}
\partial_tu_j^\eps-\Delta u_j^\eps = \lambda_j^\eps(t)u_j^\eps - \frac{\partial F^\eps( u^\eps)}{\partial u_j^\eps} & \mbox{in } \Omega \times \R_+, 1\leq j \leq m,\\
	    u^\eps(t,x) = 0 & \mbox{on } \partial\Omega\times\R_+,\\
	    u^\eps(0,x) = g(x) & \mbox{in } \Omega,
\end{cases}
\end{equation}
with the constraints
\[\int_\Omega|u_j^\eps|^2 dx = c_j^2, j = 1,\ldots,m. \]
Here $F^\eps(u) = \sum_{1\leq i<j\leq m} \eps^{-2}(u_i)^2(u_j)^2$ as above, $\lambda_j^\eps(t)$ is free, and
\begin{equation*}
\begin{cases}
 g\in H_0^1(\Omega,\Sigma), \,\,\,\, g_j(x)\geq 0 \mbox{ a.e. in } \Omega,\\
 c_j^2 = \int_\Omega g_j^2(x)dx > 0 ,\, 1 \leq j \leq m.
\end{cases}
\end{equation*}
Since the initial condition $g$ maps into $\Sigma$, each component $g_j$ is nonnegative, and the supports of $g_j$ are mutually disjoint. If a solution exists, it follows easily that 
\[\lambda_j^\eps(t) = \frac{1}{c_j^2}\int_\Omega\Big(\nabla |u_j^\eps|^2 + 2\sum_{i\neq j}\frac{(u_i^\eps)^2(u_j^\eps)^2}{\eps^2}\Big)\,dx.\]
Caffarelli-Lin showed that a unique strong solution $u^\eps$ exists and that the singular term in (\ref{eq:epsilon}) is controlled as $\eps\rightarrow 0$, so that $u^\eps$ converges in $H_{\loc}^1(\Omega\times\R_+)$, and the limit maps into $\Sigma$. 

We say that $u\in H_{\loc}^1(\Omega\times\R_+,\Sigma)$ such that $u(x,0)=g\in H_0^1(\Omega,\Sigma)\cap L^\infty(\Omega,\Sigma)$ and $u=0$ on $\partial\Omega\times\R_+$ is a \emph{suitable weak solution} of the constrained gradient flow for maps from $\Omega\rightarrow\Sigma$ if
\begin{equation}\label{eq:weak}
\begin{cases}
           \int_\Omega u_j^2 dx=\int_\Omega g_j^2 dx = c_j^2 >0, & \\
	   \partial_t u_j - \Delta u_j = \lambda_j(t)u_j-\nu_j(x,t) & \mbox{in } \mathcal{D}'(\Omega\times\R_+),\\
           u_j(\partial_t u_j - \Delta u_j) = \lambda_j(t)u_j^2 & \mbox{in } \mathcal{D}'(\Omega\times\R_+),\\
           \int_0^T\int_\Omega|\partial_t u|^2 dx dt + \int_\Omega|\nabla u|^2 dx \leq \int_\Omega |\nabla g|^2 dx, & 
\end{cases}
\end{equation}
for all $T>0$, $j=1,\ldots,m$, where $\lambda_j(t)=\frac{1}{c_j^2}\int_\Omega|\nabla u_j|^2 dx$, and $\nu_j(x,t)$ are nonnegative Radon measures supported in $\{u_j(x,t)=0\}$. 
In \cite{CL} it is shown, using various a~priori estimates, that the solution $u^\eps$ of (\ref{eq:epsilon}) converges strongly in $H_{\loc}^1(\Omega\times\R_+)$ to a suitable weak solution as in (\ref{eq:weak}).

The goal of the present article is threefold: First, to prove Lipschitz continuity of suitable weak solutions $u$ as in (\ref{eq:weak}). Second, to study the regularity of the interface between the supports of the components $u_j$. The picture we have in mind of the interface is similar to that in \cite{CL3} for the limit of the stationary problem (\ref{eq:stat}):  a smooth set that locally separates two subdomains of $\Omega$, and a lower-dimensional singular set where three or more subdomains come together. Finally, we would like to show that, as $t\rightarrow\infty$, suitable weak solutions $u$ converge strongly in $H_0^1(\Omega)$ to maps into $\Sigma$ that are stationary solutions of problem (P*). As expected, the components of these stationary solutions are eigenfunctions on their mutually disjoint supports. The Lipschitz continuity of these eigenfunctions and regularity of the corresponding nodal sets were established in \cite{CTV3} and \cite{HHT}. (See also the papers \cite{TT, DWZ3} regarding the regularity of stationary solutions to minimization problems like (\ref{eq:stat}) and the corresponding nodal partitions.)

In \cite{DWZ1}, Dancer et. al. proved a uniform H\"older estimate for a class of parabolic systems that are closely related to (\ref{eq:epsilon}) (but which lack the nonlocal term $\lambda_j^\eps(t)$), using a blow-up method and a monotonicity formula of Almgren type, first introduced by Poon \cite{P}. The present paper uses Poon's formula to obtain spatial Lipschitz regularity for (\ref{eq:weak}), the $\eps\rightarrow 0$ limit of (\ref{eq:epsilon}), but our estimate is local rather than global.

Another type of singularly perturbed system, which is related to the Lotka-Volterra model from population dynamics, features a term like $\sum_{j\neq i} \eps^{-2}u_i^\eps u_j^\eps$ in place of the $\sum_{j\neq i} \eps^{-2}u_i^\eps (u_j^\eps)^2$ term found in (\ref{eq:stat}) and (\ref{eq:epsilon}). Such a system was treated in \cite{CKL} in both the elliptic and parabolic versions, and using a viscosity method, the authors obtained local Lipschitz continuity and free boundary regularity results for the limiting solutions similar in spirit to those proved in this paper for (\ref{eq:weak}). A similar system of this type was studied by Dancer et. al. \cite{DWZ2}, who characterized the $\eps\rightarrow 0$ limit in terms of differential inequalities. For $\eps>0$, systems of this type do not have a variational structure as (\ref{eq:epsilon}) does (as a result, substantially different methods are required), but surprisingly, the singular limit exhibits a variational structure. As in the present study, the limiting system in the parabolic case converges to a stationary solution of the corresponding varational problem, and in fact, similar behavior is seen for the $\eps$ system when $\eps$ is small; see \cite{DWZ2}.

The gradient flow of harmonic maps into nonpositively curved metric spaces (such as $\Sigma$) was also studied by Mayer \cite{M} using a finite difference approach, and his methods could be generalized to the setting in \cite{CL}. However, the gradient flows constructed in \cite{M} are not convenient to work with when trying to establish Lipschitz continuity and free boundary regularity. The methods in \cite{CL} and this paper could be applied to some of the more general classes of parabolic systems discussed above, as well as singularly perturbed systems whose elliptic versions were studied in \cite{CL3, CR}. 

The paper is organized as follows. In Sect. \ref{sec:LC}, we derive a monotonicity formula of Almgren type and apply it to prove Lipschitz continuity of suitable weak solutions in Theorem \ref{thm:Lip}. Sect. \ref{sec:Bdry} is devoted to interface regularity between the various components $u_j$. Theorems \ref{thm:dim} and \ref{thm:Reg} characterize this interface as the union of a $C^{2,\beta}$ hypersurface and a singular set of codimension at least $2$. Theorem \ref{thm:dim}, which bounds the dimension of the singular set, also makes use of the monotonicity formula (Lemma~\ref{lem:mono}). The argument in Sect. \ref{sec:Bdry} follows a similar outline to the proofs of free boundary regularity for stationary problems in \cite{CL2} and \cite{CL3}, but the time-dependence and nonlocality introduce some new difficulties. In Sect. \ref{sec:time} we look at the long-time behavior of $u(x,t)$ and show that $u$ converges to a stationary solution of problem (P*) in Theorem \ref{thm:probPstar}.

The author would like to thank his advisor Fanghua Lin for suggesting this problem and for insightful discussions.

\section{Lipschitz continuity}\label{sec:LC}
The Lipschitz continuity of weak solutions to (\ref{eq:weak}) relies on a monotonicity formula of Almgren type. Almgren \cite{A} showed that if $v$ is a harmonic function on $B_1$ such that $v(0)=0$, then the functional $rD(r)/L(r)$ is nondecreasing in $r$, where $D(r) = \int_{B_r}|\nabla v|^2\,dx$ and $L(r) = \int_{\partial B_r} v^2\,d\sigma$, and used this formula to study the growth of harmonic functions. These methods were later generalized to solutions of general second-order elliptic equations, see e.g. \cite{GL}. Almgren-type monotonicity for linear parabolic equations was considered by Poon \cite{P}, and his parabolic monotonicity formula has since been applied in a variety of settings, for example in studying the regularity of a parabolic obstacle problem by Danielli et. al. \cite{D}, and in the works of Dancer et. al. \cite{DWZ1, DWZ3} on segregating species models, as discussed in the Introduction.

Denote by $G_{x_0,t_0}(x,t)$ the Green's function for the backwards heat equation at a point $(x_0,t_0)$:
\[G_{x_0,t_0}(x,t)=\frac{1}{(4\pi(t_0-t))^{n/2}}\exp\left(-\frac{|x-x_0|^2}{4(t_0-t)}\right).\]
Recall that $\partial_t G_{x_0,t_0} = -\Delta G_{x_0,t_0}$ and $\nabla G_{x_0,t_0} = \dfrac{x-x_0}{2(t-t_0)}\,G_{x_0,t_0}$. Let $G=G_{0,0}$. Following Poon \cite{P}, for a function $v:\Omega\times\R_+\rightarrow\Sigma$ and a point $(x_0,t_0)$, we define the functionals
\begin{align*}
I(v,x_0,t_0,R) &= R^2\int_{t=t_0-R^2} |\nabla v(x,t)|^2 G_{x_0,t_0}(x,t)\,dx, \\
 H(v,x_0,t_0,R) &= \int_{t=t_0-R^2} d_\Sigma^2(v(x,t),v(x_0,t_0)) G_{x_0,t_0}(x,t)\,dx,\\
N(u,x_0,t_0,R) &= \frac{2I(u,x_0,t_0,R)}{H(u,x_0,t_0,R)},
\end{align*}
where the integrals are taken over the time slice $\Omega\times \{t_0-R^2\}$. The distance in the singular space $\Sigma$ is given by $d_\Sigma(p,q) = |p-q|$ if $p$ and $q$ are in the same line of $\Sigma$, and $d_\Sigma(p,q) = |p| + |q|$ otherwise. For functions $v$ that map into $\R^m$, we will slightly abuse the above notation by writing $H(v,x_0,t_0,R) = \int_{t=t_0-R^2} |v(x,t)-v(x_0,t_0)|^2G\,dx$. We will often write $N(u,R)$ or $N(R)$, etc., when the dependence on $u$ and/or $(x_0,t_0)$ is clear. 

For a solution $w$ of the vector heat equation $w_t=\Delta w$, it can be shown (see \cite{P}) that $N(w,R)$ is monotonically increasing in $R$. In our situation, with $u$ a suitable weak solution as in (\ref{eq:weak}), we can only obtain that $N(u,R)$ plus a monomial term is increasing, but this modified monotonicity will be sufficient to prove that $u$ is Lipschitz.

\begin{lemma}\label{lem:mono}
For $u$ a suitable weak solution of \textup{(\ref{eq:weak})} and $(x_0,t_0)\in\Omega\times\R_+$, there exists a constant $C$ independent of $(x_0,t_0)$ such that the quantity 
\[N(u,x_0,t_0,R) + CR^4\]
is nondecreasing in $R$ for $0<R<R_0$, where $R_0$ is the parabolic distance from $(x_0,t_0)$ to the parabolic boundary $\partial_P(\Omega\times\R_+)$.
\end{lemma}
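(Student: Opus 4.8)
I would follow Poon's strategy for the parabolic Almgren frequency, adapting the differentiation-in-$R$ computation to accommodate the extra terms $\lambda_j(t)u_j$ and $-\nu_j$ in the equation. Writing everything in terms of the self-similar variable, the heart of the matter is the pair of identities
\[
\frac{d}{dR}\,\frac{1}{2}\log H(u,R) = \frac{1}{R}\,N(u,R) + \text{(error)},\qquad
\frac{d}{dR}\,\log I(u,R) = \frac{2}{R}\,\frac{\tilde I(u,R)}{I(u,R)} + \text{(error)},
\]
where $\tilde I$ is the ``second-order'' Gaussian energy involving $\partial_t u$ and the radial derivative of $u$, and the error terms come from the non-harmonicity of $u$. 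Then $N'(R)$ is, up to errors, $\frac{2}{R}$ times a Cauchy–Schwarz defect (the term $\tilde I \cdot H - I^2 \ge 0$ type expression built from $\partial_t u - \frac{x\cdot\nabla u}{2(t-t_0)}$), which is nonnegative; the task is to show the accumulated error is controlled by $\frac{d}{dR}(CR^4)$, i.e. is bounded in absolute value by $CR^3$ after the appropriate normalizations, uniformly in $(x_0,t_0)$.

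**Key steps, in order.** First I would reduce to $(x_0,t_0) = (0,0)$ by translation and set up the Gaussian-weighted integration by parts on the time slice $\{t = -R^2\}$, using $\nabla G = \frac{x}{2t}G$ and $\partial_t G = -\Delta G$; this is the standard manipulation that turns $I$ and $H$ and their $R$-derivatives into integrals of $u(\partial_t u - \Delta u)$ and related quantities against $G$. Second, I would insert the weak equation: on the support of $u_j$ the term $\nu_j$ vanishes, and on $\{u_j = 0\}$ one has $u_j(\partial_t u_j - \Delta u_j) = 0$ from the third line of (\ref{eq:weak}), so the $\nu_j$ contribution to every bilinear quantity appearing (which is always paired with $u_j$) drops out — this is exactly why the ``suitable'' hypothesis $u_j(\partial_t u_j - \Delta u_j) = \lambda_j u_j^2$ is built in. Third, the genuinely nonlocal term contributes $\lambda_j(t)\int u_j^2 G$; since $\lambda_j(t) = \frac{1}{c_j^2}\int_\Omega |\nabla u_j|^2\,dx \le \frac{1}{c_j^2}\int_\Omega |\nabla g|^2\,dx$ is bounded uniformly in $t$ by the energy inequality (last line of (\ref{eq:weak})), and $\int u_j^2 G \le H(u,R) + |u(x_0,t_0)|^2$ is controlled, these terms produce a factor that is $O(1)$ relative to $H$, hence after the $\frac{1}{R}$ in the logarithmic derivative they contribute an $O(1/R)$ term — which is \emph{not} yet $O(R^3)$. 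The resolution, as in Poon and in \cite{DWZ1}, is that the dangerous low-order contribution actually enters $N'$ through the difference of two such terms (one from $H'/H$, one from $I'/I$) and the structure forces a cancellation of the worst part, leaving a remainder of the right order; alternatively one absorbs it by noting it is multiplied by $N$ itself and using a Gronwall-type argument, at the cost of replacing $CR^4$ by the stated monomial after re-examining powers of $R$. Fourth, I would collect all surviving error terms, check each scales like $R^3$ (here the two powers of $R$ from $R^2$ in the definition of $I$, one more from $dR$, and the uniform bounds do the bookkeeping), and conclude $N'(R) \ge -CR^3$ with $C$ depending only on $\|\nabla g\|_{L^2}$, $n$, $m$, the $c_j$, and $\operatorname{diam}\Omega$ — in particular independent of $(x_0,t_0)$ — which is equivalent to the asserted monotonicity of $N + CR^4$.

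**Main obstacle.** The real difficulty is the nonlocal term $\lambda_j(t)u_j$: unlike $\nu_j$, it does not vanish against $u_j$, and at face value it degrades $N'$ by an $O(1/R)$ quantity rather than the harmless $O(R^3)$. Handling it requires either exhibiting the precise algebraic cancellation between its appearances in $\frac{d}{dR}\log H$ and $\frac{d}{dR}\log I$, or running a differential-inequality/Gronwall argument for $N(R)$ (using the uniform bound on $\lambda_j$ and lower bounds $N \ge 0$) and then verifying that the resulting correction can be packaged as the single monomial $CR^4$ on $0 < R < R_0$. A secondary technical point is justifying all the integrations by parts and the differentiation under the integral sign for merely weak solutions $u \in H^1_{\loc}$ with $\partial_t u \in L^2$: this is done by the usual approximation/density argument together with the defining distributional identities in (\ref{eq:weak}), but it must be stated carefully since $u$ takes values in the non-smooth target $\Sigma$ and $d_\Sigma^2(v, v(x_0,t_0))$ is only piecewise smooth — one works componentwise, using that the supports are essentially disjoint.
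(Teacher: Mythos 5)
Your outline captures the right structure — Poon's differentiation-in-$R$ scheme, the Cauchy--Schwarz defect as the engine of monotonicity, the fact that $\nu_j$ is always paired with $u_j$ and so drops out by suitability, the bounded nonlocal eigenvalue, and the goal $N'(R)\ge -CR^3$. The paper's actual resolution of what you call the ``main obstacle'' is your first suggested alternative, and it is worth knowing precisely: one completes the square by splitting the nonlocal term in half, so that the Cauchy--Schwarz pair is built from $u_t + \nabla u\cdot\frac{x}{2t} - \frac{1}{2}\lambda^\eps(t)\circ u$ in both $I'$ and $H'$. The leftover is a single term $-R^3\left(\iR|\lambda^\eps\circ u|^2G\right)\left(\iR|u|^2G\right)$, which the uniform bound $\sum_j|\lambda_j^\eps|\le M_0$ immediately reduces to $-M_0^2R^3H^2$, i.e. $N'\geq -2M_0^2R^3$. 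Your back-of-envelope ``$O(1/R)$'' estimate for the degradation is therefore overly pessimistic; once the halving is done the error is already cubic in $R$, no Gronwall step is required, and the $R^4$ monomial in the statement is just the antiderivative of $CR^3$.

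There is, however, a genuine gap in the proposal concerning how the computation is justified. You propose working directly with the weak solution $u$ and invoking ``the usual approximation/density argument.'' But the computation of $(I)'(R)$ involves $\nabla u:\nabla u_t$ and $\nabla u_j\cdot D^2 u_j\cdot\frac{x}{2t}$ on a time slice; for a weak solution with merely $u\in H^1_{\loc}(\Omega\times\R_+)$ these quantities do not make sense, and a density argument within that class cannot restore them. The paper's way around this is essential, not cosmetic: carry out the entire Rellich--Pohozaev computation for the \emph{strong} solution $u^\eps$ of the penalized system (\ref{eq:epsilon}) (where everything is classical), using the frequency $N^\eps = 2I^\eps/H$ with $I^\eps$ containing the extra penalty $2F^\eps(u^\eps)$, then \emph{integrate the differential inequality in $R$} before taking $\eps\to 0$ with the strong $H^1_{\loc}$ convergence of \cite{CL}. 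This requires a further ingredient absent from your plan: the decay $\int\int F^\eps(u^\eps)\to 0$ (Corollary 3.4 of \cite{CL}) together with a uniform lower bound on $H(u^\eps,R)$, which together kill the $F^\eps$-error in the limit. You should make explicit that the approximation is by the $\eps$-flow, not by mollification, and that the inequality is obtained first in integrated-in-$R$ form (as is necessary, since $N^\eps$ at a fixed time slice does not pass to the limit directly); otherwise the regularity bookkeeping does not close.
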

\begin{remark}
Since $u$ is a Sobolev function, $N(u,R)$ is defined on $(0,R_0)$ only up to a set of measure zero. We interpret the monotonicity of $N(u,R)+CR^4$ as follows: there exists a nondecreasing function that agrees with $N(u,R)+CR^4$ almost everywhere on $(0,R_0)$.
\end{remark}
\begin{proof}
By translation, we may assume our equations are defined on $\Omega\times[-R_0^2,\infty)$, and that $(x_0,t_0)=(0,0)$ and $u(0,0)=0$. We will work with the solution $u^\eps$ of the approximating system (\ref{eq:epsilon}) and study the derivative in $R$ of
 \[
 N^\eps( u^\eps,0,0,R) = \frac{2R^2\int_{t=-R^2}\left[|\nabla  u^\eps|^2+2F^\eps( u^\eps)\right]\, dx } { \int_{t=-R^2} | u^\eps|^2G\,dx}.
  \]
It will be convenient to write (\ref{eq:epsilon}) in the vector form
\begin{equation}\label{eq:vec}
\partial_t u^\eps - \Delta u^\eps = \lambda^\eps\circ u^\eps - \nabla F^\eps(u^\eps),\end{equation}
where $\lambda^\eps(t)$ is the vector with $j$-th component $\lambda_j^\eps(t)$, and $\circ$ denotes the entrywise product. Here, the gradient of $F^\eps$ is taken with respect to $u^\eps\in\R^m$.

To simplify the notation, we set $u= u^\eps$. For $0<R<R_0$, we will now define 
$I^\eps(R)=R^2\iR\left[|\nabla u|^2+2F^\eps( u)\right]G\,dx$,
so that $N^\eps=2I^\eps/H$. Integrating by parts and using (\ref{eq:vec}) and  and $\nabla G = \frac{x}{2t}G$, we have
\begin{align}\label{eq:IR}
I^\eps(R) &= R^2\iR\left[|\nabla u|^2+2F^\eps(u)\right]G\,dx\nonumber\\
     &= -R^2\iR \left[u\cdot\left(u_t-\lambda^\eps(t)\circ u + \nabla F^\eps(u) + \nabla u\cdot\frac{x}{2t}\right) - 2F^\eps(u)\right]G\,dx\nonumber\\
     &= -R^2\iR \left[u\cdot\left(u_t-\lambda^\eps(t)\circ u + \nabla u\cdot\frac{x}{2t} \right)+ 2F^\eps(u)\right]G\,dx.
\end{align}
The last equality comes from the fact that
\[u\cdot\nabla F^\eps(u) = \sum_{i=1}^m u_i\frac{1}{\eps^2}\sum_{j\neq i} 2u_iu_j^2 = 4\sum_{i<j}\frac{1}{\eps^2}(u_iu_j)^2 = 4F^\eps(u).\]
Using $G_t=-\Delta G$, we differentiate $H(R)$:
\begin{align}\label{eq:Hpr}
 H'(R) &= -2R\iR \left[2u\cdot u_t G + |u|^2G_t\right]\,dx\nonumber\\
       &= -4R\iR u\cdot\left(u_t+\nabla u\cdot\frac{x}{2t}\right)G\,dx.
\end{align}

We will need the following consequence of (\ref{eq:IR}) and (\ref{eq:Hpr}) in the proof of Theorem~\ref{thm:Lip}:
\begin{equation}\label{eq:Hp}
H'(R) = \frac{4}{R}\left(I^\eps(R) - R^2\iR \left[u\cdot(\lambda^\eps(t)\circ u) - 2F^\eps(u)\right] G\,dx\right).
\end{equation}

We now rewrite (\ref{eq:IR}) and (\ref{eq:Hpr}) as
\begin{align*}
 I^\eps(R) &= -R^2\iR u\cdot\Big(u_t + \nabla u\cdot\frac{x}{2t} -\frac{1}{2}\lambda^\eps(t)\circ u -\frac{1}{2}\lambda^\eps(t)\circ u\Big)G\,dx\\
 &\quad -R^2\iR 2F^\eps(u)G\,dx, \\
 H'(R) &= -4R\iR u\cdot\Big(u_t+ \nabla u\cdot\frac{x}{2t}
-\frac{1}{2}\lambda^\eps(t)\circ u +\frac{1}{2}\lambda^\eps(t)\circ u\Big)G\,dx. 
\end{align*}
Thus,
\begin{align}\label{eq:HI}
 H'(R)I^\eps(R) &= 4R^3\left(\iR u\cdot\Big(u_t + \nabla u\cdot\frac{x}{2t} -\frac{1}{2}\lambda^\eps(t)\circ u\Big)\,G\,dx\right)^2\nonumber\\
           &\quad - R^3\left(\iR u\cdot(\lambda^\eps(t)\circ u) \,G\,dx\right)^2\nonumber\\
           &\quad + 4R^3\left(\iR u\cdot\left(u_t+\nabla u\cdot\frac{x}{2t} \right)G\,dx\right)\left(\iR 2F^\eps(u)G\,dx\right).
\end{align}

Next, we differentiate $I^\eps(R)$, using the properties of the heat kernel:
\begin{align*}
 (I^\eps)'(R) &=  2R\iR \left[|\nabla u|^2 + 2F^\eps(u)\right]G\,dx \\
       &- 2R^3\iR \left[2\nabla u:\nabla u_t\,G - |\nabla u|^2\Delta G + 2\nabla F^\eps(u)\cdot u_tG - 2F^\eps(u)\Delta G\right]dx,
\end{align*}
where $A:B=\mbox{tr}(A^t B)$. Integrating by parts and using $\nabla G = \dfrac{x}{2t}G$ again, we obtain
\begin{align*}       
 (I^\eps)'(R) &= 2R\iR \left[|\nabla u|^2 + 2F^\eps(u)\right]G\,dx\\
  & - 4R^3\iR \Big[\nabla u:\nabla u_t + \sum_{j=1}^m \nabla u_j\cdot D^2u_j\cdot\frac{x}{2t}\\ 
  &\hspace{3cm} + \nabla F^\eps(u)\cdot u_t + \nabla F^\eps(u)\cdot\nabla u\cdot\frac{x}{2t}\Big] \,G\,dx\\
       &= 2R\iR 2F^\eps(u)G\,dx\\
       & -4R^3\iR \left[\nabla u : \nabla\left(u_t+\nabla u\cdot\frac{x}{2t}\right) + \nabla F^\eps(u)\cdot \left(u_t+\nabla u\cdot\frac{x}{2t}\right)\right]G\,dx.
\end{align*}
The term $2R\int 2F^\eps(u)G\,dx\geq 0$ will not be needed in the final estimate. Integrating by parts again and using (\ref{eq:vec}), we have
\begin{align}\label{eq:Ip}
 (I^\eps)'(R) &\geq 4R^3\iR \left(\Delta u + \nabla u\cdot\frac{x}{2t} - \nabla F^\eps(u)\right)\cdot\left(u_t+\nabla u\cdot\frac{x}{2t}\right)G\,dx\nonumber\\
       &= 4R^3\iR \left(u_t+\nabla u\cdot\frac{x}{2t}-\lambda^\eps(t)\circ u \right)\cdot\left(u_t+\nabla u\cdot\frac{x}{2t}\right)G\,dx \nonumber \\      
       &= 4R^3\iR \Big|u_t+\nabla u\cdot\frac{x}{2t} - \frac{1}{2}\lambda^\eps(t)\circ u\Big|^2G\,dx\nonumber \\       
       &\quad - R^3\iR |\lambda^\eps(t)\circ u|^2G\,dx. 
\end{align}

Combining (\ref{eq:HI}) and (\ref{eq:Ip}), we get
\begin{align}\label{eq:IHHI}
 &(I^\eps)'(R)H(R) - H'(R)I^\eps(R)\nonumber\\
 &\qquad \geq 4R^3\left(\iR \Big|u_t+\nabla u\cdot\frac{x}{2t} - \frac{1}{2}\lambda^\eps(t)\circ u\Big|^2G\,dx\right)\left(\iR |u|^2G\,dx\right)\nonumber\\
 &\qquad \quad - R^3\left(\iR|\lambda^\eps(t)\circ u|^2G\,dx\right)\left( \iR |u|^2G\,dx\right) \nonumber\\
  &\qquad \quad - 4R^3\left(\iR u\cdot\Big(u_t + \nabla u\cdot\frac{x}{2t} -\frac{1}{2}\lambda^\eps(t)\circ u\Big)\,G\,dx\right)^2\nonumber\\
           &\qquad\quad + R^3\left(\iR u\cdot(\lambda^\eps(t)\circ u) \,G\,dx\right)^2\nonumber\\
           &\qquad\quad - 4R^3\left(\iR u\cdot\left(u_t+\nabla u\cdot \frac{x}{2t}\right)G\,dx\right)\left(\iR 2F^\eps(u)G\,dx\right).
\end{align}
By the Cauchy-Schwarz inequality,
\begin{align}\label{eq:4R3}
 & 4R^3\left(\iR u\cdot\Big(u_t + \nabla u\cdot\frac{x}{2t} -\frac{1}{2}\lambda^\eps(t)\circ u\Big)\,G\,dx\right)^2\nonumber\\
 &\quad\leq 4R^3\left(\iR \Big|u_t+\nabla u\cdot\frac{x}{2t} - \frac{1}{2}\lambda^\eps(t)\circ u\Big|^2G\,dx\right)\left(\iR |u|^2G\,dx\right).
\end{align}
It can be shown that $\sum_{j=1}^m |\lambda_j^\eps(t)|\leq M_0$ for some $M_0$ uniform in $t$ and $\eps$. (See \cite[Remark~3.2]{CL}.) Thus,
\begin{equation}\label{eq:R3}
- R^3\left(\iR|\lambda^\eps(t)\circ u|^2G\,dx\right)\left( \iR |u|^2G\,dx\right) \geq -M_0^2R^3(H(R))^2.
\end{equation}
Noting that, by (\ref{eq:Hpr}), $-4R^3\iR u\cdot\left(u_t+\nabla u\cdot \frac{x}{2t}\right)G\,dx = R^2H'(R)$, we conclude from (\ref{eq:IHHI}), (\ref{eq:4R3}), and (\ref{eq:R3}) that
\begin{equation*}
(N^\eps)'(R) \geq -2M_0^2R^3 + \frac{2R^2H'(R)}{(H(R))^2}\iR 2F^\eps(u)G\,dx.
\end{equation*}
It is clear from (\ref{eq:Hp}) that $H'(R)\geq -4R\iR u\cdot (\lambda^\eps(t)\circ u)G\,dx\geq -4M_0R H(R)$, which gives
\begin{equation}\label{eq:ineq}
(N^\eps)'(R) \geq -2M_0^2R^3 - \frac{8R^3}{H(R)}\iR 2F^\eps(u)G\,dx.
\end{equation}

To pass to the limit as $\eps\rightarrow 0$, we must first integrate in $R$. This is because we only have convergence of $u^\eps$ to $u$ in $H_0^1(\Omega\times\R_+)$, and (\ref{eq:ineq}) holds only for a time slice. For $0<R_1<R_2<R_0$, we have
 \[N^\eps(R_2)-N^\eps(R_1) \geq -C(R_2^4-R_1^4) - \int_{R_1}^{R_2}\frac{8R^3}{H(R)}\iR 2F^\eps(u)G\,dx\,dR.\]
In \cite[Corollary~3.4]{CL}, it is established that as $\eps\rightarrow 0$, 
\begin{equation}\label{eq:Feps}
\int_{t_1}^{t_2} \int_\Omega F^\eps (u^\eps)dxdt\rightarrow 0
\end{equation}
 for any $t_1<t_2$. Because $\int_\Omega |u^\eps(x,t)|^2\,dx=1$ for all $t$, we can bound $H(u^\eps,R)$ from below on the interval $[R_1,R_2]$, uniformly in $\eps$. This implies 
 \begin{equation}\label{eq:limeps}
 \lim_{\eps\rightarrow 0}\, [N^\eps(u^\eps,R_2)-N^\eps(u^\eps,R_1)] \geq -C(R_2^4-R_1^4).
 \end{equation}
It remains to show that the left hand side of (\ref{eq:limeps}) converges to $N(u,R_2)-N(u,R_1)$, where $u$ is a suitable weak solution of (\ref{eq:weak}). By the strong convergence of $u^\eps$ to $u$ in $H_{\loc}^1\left(\Omega\times[-R_0^2,\infty)\right)$, it follows that $\int_{R_1}^{R_2} H(u^\eps,R)\,dR\rightarrow\int_{R_1}^{R_2} H(u,R)\,dR$ and that $\int_{R_1}^{R_2} I^\eps(u^\eps,R)\,dR\rightarrow\int_{R_1}^{R_2} I(u,R)\,dR$. Letting $t_i = -R_i^2, i=1,2$, we see from (\ref{eq:Hpr}) that
\begin{align*}
\int_{R_1}^{R_2}\frac{d}{dR}H(u^\eps,R)\,dR &= 2\int_{t_1}^{t_2}\int_\Omega u^\eps\cdot \left(u^\eps_t+\nabla u^\eps\cdot\frac{x}{2t}\right)G\,dx\,dt\\
&\rightarrow -4\int_{R_1}^{R_2}R\iR u\cdot\left(u_t+\nabla u\cdot\frac{x}{2t}\right)G\,dx\\
&=\int_{R_1}^{R_2}\frac{d}{dR} H(u,R)\,dR.
\end{align*}
Similarly, we can conclude from (\ref{eq:Ip}) and (\ref{eq:limeps}) that $\int_{R_1}^{R_2}\frac{d}{dR}I^\eps(u^\eps,R)\,dR$ converges to $\int_{R_1}^{R_2}\frac{d}{dR}I(u,R)\,dR$. Noting again that $H(u^\eps,R)$ is uniformly bounded from below for $R\in[R_1,R_2]$, and that $H$ and $I$ are uniformly bounded from above, it is straightforward to check that
\[\int_{R_1}^{R_2}\frac{d}{dR}\left(\frac{2I^\eps(u^\eps,R)}{H(u^\eps,R)}\right)dR\rightarrow\int_{R_1}^{R_2}2\frac{I'(u,R)H(u,R)-I(u,R)H'(u,R)}{(H(u,R))^2}\,dR,\]
and we obtain $N(u,R_2)+CR_2^4\geq N(u,R_1)+CR_1^4$.

\end{proof}

By Lemma \ref{lem:mono},  the \emph{frequency} $N(u,x_0,t_0):=\lim_{R\rightarrow0} N(u,x_0,t_0,R)$ exists, and we have $N(u,x_0,t_0)~\leq~N(u,x_0,t_0,R) +~CR^4$.  Sometimes we will write $N(x_0,t_0)$ when the dependence on $u$ is clear. When $u(x_0,t_0)=0$, the frequency $N(u,x_0,t_0)$ is equal to the parabolic vanishing order of $u$ at the point $(x_0,t_0)$. In other words, if as $(x,t)\rightarrow (x_0,t_0)$, $u(x,t)$ is in the class $O(\sqrt{|x-x_0|^{2k}+|t-t_0|^k})$ but the quantity $u(x,t)/\sqrt{|x-x_0|^{2(k-1)}+|t-t_0|^{(k-1)}}\rightarrow 0$, then $N(u,x_0,t_0) = k$. We will also use the fact that for a solution of $v_t=\Delta v$, the frequency exists, and $N(v,x,t)$ gives the degree of the first term in the Taylor expansion of $v$ in caloric polynomials around $(x,t)$. Since $N(x,t,R)$ is continuous in $x$ and $t$, and $N(x,t)\leq N(x,t,R)+CR^4$, we have that $N(x,t)$ is upper-semicontinuous in $(x,t)$.

Let $\Gamma= \{(x,t)\in\Omega\times\R_+ | u(x,t)=0\}$. Because $u$ maps into $\Sigma$, the free interface $\Gamma$ divides $\Omega\times\R_+$ into $m$ mutually disjoint sets $\{u_j(x,t)>0\}$, $j=1,\ldots,m$.

\begin{theorem}\label{thm:Lip}
 A suitable weak solution $u(x,t)$ of \textup{(\ref{eq:weak})} is locally Lipschitz in $x$. 
\end{theorem}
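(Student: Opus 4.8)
\emph{Proof strategy.} The plan is to reduce the Lipschitz bound to a linear growth estimate at the free interface and then invoke interior parabolic regularity away from it. Concretely, it is enough to prove that on each compact $K\subset\Omega\times\R_+$ there exist $C,r_0>0$ with
\[
H(u,x_0,t_0,r)\le Cr^2\qquad\text{for all }(x_0,t_0)\in\Gamma\cap K,\ 0<r<r_0 .
\]
Granting this: since $d_\Sigma(u(x,t),u(x_0,t_0))=|u(x,t)|$ when $u(x_0,t_0)=0$, and $G_{x_0,t_0}(\cdot,t_0-r^2)\ge c\,r^{-n}$ on $B_r(x_0)$, the estimate gives $r^{-n}\int_{B_r(x_0)}|u(x,t_0-r^2)|^2\,dx\le Cr^2$, and combining a range of scales bounds the parabolic mean of $|u|^2$ over backward cylinders of radius $r$ based at $(x_0,t_0)$ by $Cr^2$. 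A direct computation from (\ref{eq:weak}), using $\sum_j u_j\nu_j=0$ and $|\nabla|u||\le|\nabla u|$, shows that $|u|$ is a weak subsolution of $\partial_t w-\Delta w\le M_0w$ (recall $\sum_j|\lambda_j|\le M_0$ and $u$ is bounded, by \cite{CL}); the local boundedness estimate for parabolic subsolutions then upgrades the $L^2$ bound to $\sup|u|\le Cr$ on the half-size cylinder. Finally, if $u(\bar x,\bar t)\ne 0$, let $d$ be the parabolic distance from $(\bar x,\bar t)$ to $\Gamma\cup\partial_P(\Omega\times\R_+)$ and let $Q$ be the backward cylinder of radius $d$ about $(\bar x,\bar t)$; on $Q$ exactly one component, say $u_j$, is positive, and there $u_j$ solves the linear equation $\partial_t u_j-\Delta u_j=\lambda_j(t)u_j$ with $|\lambda_j|\le M_0$, so interior gradient estimates give $|\nabla_x u_j(\bar x,\bar t)|\le\tfrac Cd\sup_Q u_j\le\tfrac Cd\cdot Cd=C$, the growth estimate being applied at a nearest free interface point. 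Hence $|\nabla_x u|$ is locally bounded, as claimed.

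For the growth estimate, the almost-monotonicity of Lemma~\ref{lem:mono} is the essential tool. Fix $(x_0,t_0)\in\Gamma\cap K$ and let $R_0$ be as in that lemma. Monotonicity of $N(u,x_0,t_0,R)+CR^4$ gives both a uniform upper bound $N(u,x_0,t_0,R)\le N_0$ for $0<R<R_0$ (let $R\nearrow R_0$) and the lower bound $N(u,x_0,t_0,R)\ge N(u,x_0,t_0)-CR^4$. Passing to the limit $\eps\to0$ in identity (\ref{eq:Hp}) — using $\int_\Omega F^\eps(u^\eps)\to0$ from \cite{CL} and $|\lambda_j|\le M_0$ — one gets the differential inequality
\[
\Big|\frac{d}{dR}\log H(u,x_0,t_0,R)-\frac{2N(u,x_0,t_0,R)}{R}\Big|\le CR,\qquad 0<R<R_0 .
\]
Integrating from $r$ to $R_0$ and inserting the lower bound for $N$ (the correction is controlled since $\int_0^{R_0}R^3\,dR<\infty$) yields $H(u,x_0,t_0,r)\le C\,r^{\,2N(u,x_0,t_0)}$. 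Thus $H\le Cr^2$ holds as soon as $N(u,x_0,t_0)\ge1$ at every free interface point.

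This lower bound on the frequency is the heart of the proof and I expect it to be the main obstacle. The route is a blow-up. Put $u_r(x,t)=u(x_0+rx,t_0+r^2t)/H(u,x_0,t_0,r)^{1/2}$, so that $H(u_r,0,0,1)=1$ and $N(u_r,0,0,R)=N(u,x_0,t_0,rR)$ up to a term tending to $0$. The uniform bound $N(u_r,0,0,R)\le N_0$ together with the identity $I=\tfrac12 NH$ and the control of $H$ bounds $u_r$ in $H^1_{\loc}$, so along a subsequence $u_r\to v$, strongly in $H^1_{\loc}$, where $v$ is a nonzero, parabolically homogeneous weak solution of the limiting equations $\partial_t v_j-\Delta v_j=-\nu_j$, $\nu_j\ge0$ supported on $\{v_j=0\}$ (the nonlocal term scales out since $|\lambda_j|\le M_0$), with $v(0,0)=0$; its homogeneity degree equals $\alpha:=N(u,x_0,t_0)$ by the equality case of the monotonicity formula, since $N(v,0,0,R)\equiv\alpha$. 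So $v$ is a homogeneous solution of the limiting harmonic-map-type flow into the nonpositively curved space $\Sigma$, and its frequency satisfies $\alpha\ge1$ just as in the stationary theory of Caffarelli--Lin \cite{CL2, CL3}: at a free interface point, where (as for the stationary problem) at least two components are positive nearby, one applies an Alt--Caffarelli--Friedman-type monotonicity formula to a pair of nonnegative subsolutions $v_a,v_b$ with disjoint supports — two such functions cannot both vanish faster than linearly at a common zero — while at points where three or more components come together at least one positivity cone has Gaussian measure at most $\tfrac12$, and a Faber--Krahn comparison for the Ornstein--Uhlenbeck operator forces $\alpha\ge1$ there as well. The delicate steps are the compactness and the identification of $v$ as a genuine weak solution of the limiting problem — one must rule out energy concentration and control the defect measures $\nu_j$ in the limit — and the sharp parabolic ACF and Gaussian Faber--Krahn inequalities; these carry the arguments of \cite{CL2, CL3} over to the time-dependent, Gaussian-weighted setting furnished by Poon's monotonicity formula.
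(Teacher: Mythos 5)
Your overall architecture matches the paper's proof: reduce to a linear growth estimate $H(u,x_0,t_0,r)\leq Cr^2$ at free interface points via the almost-monotonicity of $N$, then conclude Lipschitz by interior parabolic estimates/Harnack in each positivity set, using a nearest-interface point to control the $L^\infty$ size. The derivation of the growth estimate from the frequency lower bound via integrating the logarithmic derivative of $H$ (coming from \eqref{eq:Hp}) is also essentially identical to the paper's.

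The gap is in your treatment of the frequency lower bound $N(u,x_0,t_0)\geq 1$ on $\Gamma$, which you correctly identify as the crux but leave unfinished. You propose a blow-up plus a classification of homogeneous limits using parabolic Alt--Caffarelli--Friedman monotonicity (for $d=2$ components) and a Gaussian Faber--Krahn inequality for the Ornstein--Uhlenbeck operator (for $d\geq 3$), and you explicitly flag that ruling out energy concentration, identifying the limiting defect measures, and establishing the needed parabolic ACF/Faber--Krahn inequalities are ``delicate''. None of these is carried out, and each would be a nontrivial undertaking; moreover, homogeneous blow-ups are not even constructed until Lemma~\ref{lem:blowup}, which appears after Theorem~\ref{thm:Lip} in the paper. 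The paper instead obtains $N\geq 1$ by an elementary two-line argument that bypasses blow-ups entirely: since $u\in H^1_{\loc}(\Omega\times\R_+)$, $u$ is $L^2$-approximately differentiable at $\mathcal L^{n+1}$-a.e.\ $(x,t)$ (Lin--Yang \cite{L}, p.~36), and at any such point the vanishing order is at least one, so $N(x,t)\geq 1$ a.e.; upper-semicontinuity of $N$, which is an immediate consequence of Lemma~\ref{lem:mono}, then gives $N(x,t)\geq 1$ for every $(x,t)$, in particular on $\Gamma$. You should replace the proposed blow-up/ACF/Faber--Krahn program by this observation; as written, the key step of your proof is an open-ended program rather than a proof.
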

\begin{proof}

On the interior of each set $\{u_j(x,t)>0\}$, we see from (\ref{eq:weak}) that $u_j$ satisfies $\partial_t u_j-\Delta u_j=\lambda_j(t)u_j$ in $\mathcal{D}'(\{u_j(x,t)>0\})$. Since $\lambda_j(t)$ is in $L^\infty(\R_+)$, the standard parabolic estimates (see, e.g., \cite[Chapter~7]{Lie}) imply that $u$ is locally Lipschitz in $x$ on the interior of $\{u_j(x,t)>0\}$ for each $j=1,\ldots,m$.

We will now use Lemma \ref{lem:mono} to show that $u$ grows linearly from the free interface. Because $u\in H_{\loc}^1(\Omega\times\R_+)$, we have that $u$ is $L^2$-approximately differentiable in $x$, that is, $|B(x,r)|^{-1}\int_{B(x,r)} r^{-2}|u(y,t)-u(x,t)-\nabla u(x,t)(y-x)|^2\,dy \rightarrow 0$ as $r\rightarrow 0$, for $\mathcal L^{n+1}$-a.e. $(x,t)\in\Omega\times\R_+$. (See \cite{L}, page 36.) At any such point where $u$ is approximately differentiable, the frequency $N(x,t)\geq 1$. By upper-semicontinuity, $N(x,t)\geq 1$ everywhere in $\Omega\times\R_+$.
 
Fix a point $(x_0,t_0)\in\Gamma$. As above, let $R_0$ be the distance from $(x_0,t_0)$ to the parabolic boundary of $\Omega\times\R_+$. For the approximating heat flow $u^\eps(x,t)$, we conclude from (\ref{eq:Hp}) that  
  \[\frac{d}{dR} H(u^\eps,R) \geq \frac{4}{R}\left( I^\eps (u^\eps,R) - R^2M_0\int_{t=t_0-R^2} |u^\eps|^2 G_{x_0,t_0}\right) dx,\]
for $0<R<R_0$. This implies
\begin{equation}\label{eq:logH}
\frac{d}{dR}\log H(u^\eps,R) \geq \frac{2}{R}N^\eps(u^\eps,R)-CR.
\end{equation}
From the proof of Lemma \ref{lem:mono}, it is clear that $\int_{R_1}^{R_2}N^\eps(u^\eps,R)\,dR\rightarrow\int_{R_1}^{R_2}N(u,R)\,dR$ for $0<R_1<R_2\leq R_0$. Then, integrating (\ref{eq:logH}) from $R_1$ to $R_2$ and taking the limit as $\eps\rightarrow 0$, we obtain
\begin{align*}
\log \frac{H(u,R_2)}{H(u,R_1)} &\geq \int_{R_1}^{R_2}\frac{2}{R}N(u,R)\,dR - C(R_2^2-R_1^2)\\
 &\geq \int_{R_1}^{R_2} \frac{2}{R}\left(N(u,x_0,t_0)-CR^4\right)\,dR - C(R_2^2-R_1^2).
\end{align*}
with $C$ independent of $(x_0,t_0)$. Since $N(u,x_0,t_0)\geq 1$, we conclude
\[H(u,R_1)\leq H(u,R_2)\frac{R_1^2}{R_2^2}\exp\left(C(R_2^4-R_1^4+R_2^2-R_1^2)\right).\]
Choosing $R_2=R_0$ and renaming $R_1=R$, this implies
\begin{equation*}
\int_\Omega d_\Sigma^2\left(u(x,t_0-R^2),0\right)G_{x_0,t_0}(x,t_0-R^2)\,dx = H(u,R)\leq C_0R^2.
\end{equation*}
for $0<R\leq R_0$, where $C_0$ depends on $R_0$.
Making the change of variables $x=x_0+Ry$, we obtain
\begin{equation}\label{eq:Hineq}
\frac{1}{R^2}\int_{\R^n}d_\Sigma^2\left(u(x_0+Ry,t_0-R^2),0\right)G_{0,0}(y,1)\,dy \leq C_0,
\end{equation}
where we have extended $u(x,t)$ by $0$ for $x\notin\Omega$.

Let $(x_1,t_1)\in\Omega\times\R_+$ be such that $u(x_1,t_1)\neq 0$ and $t_1<t_0$, and suppose that $(x_0,t_0)$ is the closest point in $\Gamma$ to $(x_1,t_1)$ in the parabolic distance. Let $s = t_0-t_1$, and let $\rho = d_P((x_0,t_0),(x_1,t_1))=\sqrt{s+|x_0-x_1|^2}$. Since $u$ maps into $\Sigma$, we have $u_{j_0}(x_1,t_1)>0$ for some $1\leq j_0\leq m$, and thus $u_{j_0}>0$ in $Q=B_{\rho/2}(x_1)\times (t_1-\rho^2/4,t_0)$ and $u_j=0$ in $Q$ for all other $j$. By (\ref{eq:weak}), $u_{j_0}$ satisfies 
\[\partial_t u_{j_0}-\Delta u_{j_0} =\lambda_{j_0}(t)u_{j_0}\geq 0 \quad \mbox{ in } \mathcal{D}'(Q).\]
Assume that for some point $(x_2,t_1)\in Q$, there holds
\[u_{j_0}(x_2,t_1) \geq Mr\]
with $r=\rho/2$ and $M$ large. Define the cylinders $Q_1= B_r(x_1)\times (t_1-r^2,t_1+s/4)$ and $Q_2=B_r(x_1)\times (t_0-s/4,t_0)$. By the parabolic Harnack inequality,
\[ \inf_{Q_2} u_{j_0} \geq C_1 \sup_{Q_1} u_{j_0} \geq C_1 Mr.\]
We observe $d_\Sigma(u(x,t),0)=|u_{j_0}(x,t)|$. It is clear that $s=t_0-t_1=a^2r^2$  for some fixed $a\leq\frac{1}{2}$. Taking $R=ar/4$ in (\ref{eq:Hineq}), so that $R^2<s/4$, we see
\[C_0\geq \int_{\R^n}\frac{\left|u_{j_0}(x_0+ary/4,t_0-a^2r^2/16)\right|^2}{(ar/4)^2}G_{0,0}(y,1)\,dy \geq c(n)C_1 M,\]
a contradiction if $M>C_0/(c(n)C_1)$.
\end{proof}

\begin{lemma}\label{lem:Leb}
 The free interface $\Gamma\subset\Omega\times\R_+$ has zero $(n+1)$-dimensional Lebesgue measure.
\end{lemma}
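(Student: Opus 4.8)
The plan is to argue by contradiction, using that the frequency is finite (Lemma~\ref{lem:mono}) while a density point of $\Gamma$ would force it to be infinite. Set $w:=\sum_{j=1}^m u_j$; since the $u_j$ are nonnegative with mutually disjoint supports, $w=|u|$, so $\Gamma=\{w=0\}$, and by Theorem~\ref{thm:Lip} $w$ is continuous. From the weak formulation~(\ref{eq:weak}) one has $\partial_t w-\Delta w=\sum_j\lambda_j(t)u_j-\sum_j\nu_j$ in $\mathcal D'(\Omega\times\R_+)$; since the $\nu_j$ are nonnegative measures, each $\lambda_j(t)\geq0$, $\sum_j\lambda_j(t)\leq M_0$, and $u_j\geq0$, this yields
\[\partial_t w-\Delta w\leq M_0\,w\qquad\text{in }\mathcal D'(\Omega\times\R_+),\]
i.e. $w$ is a nonnegative weak subsolution of $\partial_t-\Delta-M_0$. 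I would then show that if $|\Gamma|>0$, then $w$ — hence $u$ — vanishes to infinite order at a suitable point of $\Gamma$.

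Assume $|\Gamma|>0$. The parabolic cylinders $Q_r(x_0,t_0)=B_r(x_0)\times(t_0-r^2,t_0)$ are backward halves of balls for the metric $d_P((x,t),(y,s))=\max\{|x-y|,|t-s|^{1/2}\}$ and form a doubling family, so the Lebesgue differentiation theorem applies along them, and $\mathcal L^{n+1}$-a.e. point of $\Gamma$ has parabolic density $1$. Since $\partial\Omega\times\R_+$ and $\Omega\times\{0\}$ are $\mathcal L^{n+1}$-null, I can pick such a point $(x_0,t_0)$ with $x_0\in\Omega$, $t_0>0$, so that $\theta(r):=|Q_r(x_0,t_0)\setminus\Gamma|/|Q_r(x_0,t_0)|\to0$ as $r\to0^+$. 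Choosing $r_1>0$ with $\overline{Q_{r_1}(x_0,t_0)}\subset\Omega\times\R_+$, setting $r_k=2^{-k}r_1$, and writing $S(r)=\sup_{Q_r(x_0,t_0)}w$ (finite, nondecreasing in $r$), the parabolic local maximum estimate for subsolutions (see, e.g., \cite{Lie}) gives a constant $C_2=C_2(n,M_0)$, uniform for $r\leq r_1$, with
\[S(r/2)\leq C_2\Big(\tfrac{1}{|Q_r|}\textstyle\int_{Q_r}w^2\,dx\,dt\Big)^{1/2}\leq C_2\,S(r)\sqrt{\theta(r)},\]
the last step because $w\equiv0$ off $\Gamma$. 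Hence $S(r_{k+1})\leq C_2\sqrt{\theta(r_k)}\,S(r_k)$. Given $N\in\mathbb N$, choosing $k_N$ with $C_2\sqrt{\theta(r_k)}\leq2^{-N}$ for $k\geq k_N$ and iterating, $S(r_k)\leq2^{-N(k-k_N)}S(r_{k_N})$; since $2^{-k}=r_k/r_1$ this gives $S(r_k)\leq C_N r_k^N$ for $k\geq k_N$, and by monotonicity $S(r)\leq C_N'\,r^N$ for all small $r$ and every $N$.

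So $w$, and therefore $u$ (as $|u|=w$), vanishes to infinite order at $(x_0,t_0)$; thus the parabolic vanishing order of $u$ there exceeds every integer, whence $N(u,x_0,t_0)=+\infty$, contradicting Lemma~\ref{lem:mono}, which gives $N(u,x_0,t_0)\leq N(u,x_0,t_0,R_0/2)+C(R_0/2)^4<\infty$. Therefore $|\Gamma|=0$. I expect the main obstacle to be the iteration bookkeeping: checking that the Moser constant $C_2$ does not degenerate as the cylinders shrink, and that the accumulated factor $\prod_k C_2\sqrt{\theta(r_k)}$ outpaces every power of $r_k$ once $\theta(r_k)$ is small. (An alternative would be a blow-up argument at $(x_0,t_0)$: by Lemma~\ref{lem:mono} and the estimates behind Theorem~\ref{thm:Lip} the rescalings $u(x_0+r\,\cdot\,,t_0+r^2\,\cdot\,)/\sqrt{H(u,x_0,t_0,r)}$ are locally bounded and equicontinuous, so a subsequence converges locally uniformly to a continuous limit normalized by $H(\cdot,0,0,1)=1$; but this limit vanishes on a set of full density, hence identically — a contradiction. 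This route, however, needs the compactness of Almgren blow-ups, which the iteration avoids.)
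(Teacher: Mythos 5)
Your argument is correct, but it takes a different route from the paper's: the paper simply cites Lin's nodal-set estimates \cite{Li}, which use the Almgren-type monotonicity to obtain much stronger Hausdorff-measure bounds on $\Gamma$, whereas you prove only the Lebesgue-null statement but do so directly and elementarily, combining the Lebesgue density theorem with the parabolic local maximum estimate for the subsolution $w=|u|$ and the finiteness of the frequency supplied by Lemma~\ref{lem:mono}. Both approaches rest on the same monotonicity input; yours trades sharpness for being self-contained. Two places worth firming up if this were to replace the citation: the passage from ``$w$ vanishes to infinite order'' to ``$N(u,x_0,t_0)=+\infty$'' leans on the paper's informal remark identifying frequency with vanishing order; a cleaner route is to observe that (\ref{eq:Hp}) yields, after the $\eps\to0$ limit, the two-sided bound $\big|\tfrac{d}{dR}\log H(u,R)-\tfrac{2}{R}N(u,R)\big|\leq CR$, so $N(u,R)\leq N(u,R_0)+CR_0^4=:N_0'$ forces $H(u,R)\geq c\,(R/R_0)^{2N_0'}$ on $(0,R_0)$, contradicting the super-polynomial decay $H(u,R)=O(R^{2k})$ for every $k$ that your pointwise bound $S(r)\leq C_N r^N$ delivers (the Gaussian tail of $G_{x_0,t_0}$ outside a fixed neighborhood of $x_0$ contributes only $O(e^{-c/R^2})$ to $H$, which is harmless). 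You should also state explicitly that the one-sided cylinders $Q_r(x_0,t_0)$ form a regular subfamily of the doubling parabolic balls, so Lebesgue differentiation indeed applies; and your concern about the Moser constant is unfounded, since rescaling $Q_r$ to unit scale turns the zeroth-order coefficient $M_0$ into $r^2M_0\leq r_1^2M_0$, so $C_2$ may be taken uniform in $r\leq r_1$.
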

\begin{proof}
The lemma follows from a much stronger result controlling the size of the nodal set of solutions to (\ref{eq:weak}) that could be concluded from the Almgren monotonicity formula, see \cite{Li} for the details. 
\end{proof}

\section{Regularity of free interfaces}\label{sec:Bdry}
We now study the regularity of the free interface $\Gamma = \{(x,t)\in\Omega \times \R_+ | u(x,t) = 0\}$, where $u$ is a suitable weak solution of (\ref{eq:weak}). 

First, we want to establish a gap in the values of the frequency function $N(x,t)$ for $(x,t)\in\Gamma$. As noted above, the smallest possible value of $N(x,t)$ is 1, corresponding to degree-$1$-homogeneous behavior of each component of $u$ near $(x,t)$. We will show that the next possible value is $N(x,t) = 1+\delta_n$ for some positive constant $\delta_n$ depending on the dimension. For this, we need the homogeneous blow-ups of $u$ for points in $\Gamma$:
 
\begin{lemma}\label{lem:blowup}
Assume $(x_0,t_0)\in\Gamma$. Then for any $\rho_k \rightarrow 0$, there is a subsequence of $u_{\rho_k}=u(x_0+\rho_k x, t_0 + \rho_k^2 t)$ that converges weakly in $H_{\loc}^1(\R^n\times\R_-)$ to a function $\bar u(x,t)$ defined on $\R^n\times\R_-$.  Moreover, $\bar u$ satisfies $\bar u_t-\Delta \bar u=0$ in $\mathcal{D}'(\R^n\times\R_-)$ and $\bar u(\rho x, \rho^2 t) = \rho^\alpha\bar u(x,t)$ for $\rho\geq 0$, where $\alpha=N(x_0,t_0)$.
\end{lemma}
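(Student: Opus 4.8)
The plan is to run the Almgren--Poon blow-up procedure, following the scheme of \cite{CL2, CL3, DWZ1} and using the monotonicity formula of Lemma~\ref{lem:mono} throughout. The blow-up that has a nontrivial homogeneous limit is the \emph{normalized} one, so I would first set $d_k := H(u,x_0,t_0,\rho_k)^{1/2}$ and work with $v_k(x,t):=d_k^{-1}u(x_0+\rho_k x,\,t_0+\rho_k^2 t)$; this differs from the $u_{\rho_k}$ of the statement by the factor $d_k$, which is necessary since $H(u,x_0,t_0,\rho_k)\to 0$ at a free-interface point and the unnormalized limit would be identically zero. A change of variables using the scaling of the backward heat kernel $G_{x_0,t_0}$ gives $H(v_k,0,0,R)=d_k^{-2}H(u,x_0,t_0,\rho_k R)$ and $I(v_k,0,0,R)=d_k^{-2}I(u,x_0,t_0,\rho_k R)$, hence $N(v_k,0,0,R)=N(u,x_0,t_0,\rho_k R)$ and $H(v_k,0,0,1)=1$. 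Each $v_k$ maps into $\Sigma$ and solves $\partial_t v_{k,j}-\Delta v_{k,j}=\rho_k^2\lambda_j(t_0+\rho_k^2 t)\,v_{k,j}-\mu_{k,j}$ in $\mathcal{D}'$, with $\mu_{k,j}\ge 0$ supported in $\{v_{k,j}=0\}$.

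Next I would establish uniform $H^1$ bounds for $v_k$ on parabolic cylinders and extract a limit. By Lemma~\ref{lem:mono}, $N(u,x_0,t_0,r)\le N(u,x_0,t_0,R_0)+CR_0^4=:N_0$ for $r<R_0$, so $N(v_k,0,0,R)\le N_0$ once $\rho_k R<R_0$; combined with two-sided control of $H$ obtained exactly as in the proof of Theorem~\ref{thm:Lip} (which keeps $H(v_k,0,0,R)$ between positive constants depending only on $R$), this makes $I(v_k,0,0,R)=\frac12 N(v_k,0,0,R)\,H(v_k,0,0,R)$ bounded uniformly in $k$ for each fixed $R$. Since $G_{0,0}$ is bounded above and below on each cylinder $B_\Lambda\times[-R_2^2,-R_1^2]$, these Gaussian-weighted bounds translate into a uniform bound on $\int_{-R_2^2}^{-R_1^2}\!\int_{B_\Lambda}(|v_k|^2+|\nabla v_k|^2)\,dx\,dt$. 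For the time derivative I would use the term $4R^3\int_{t=-R^2}|u_t+\nabla u\cdot\frac{x}{2t}-\frac12\lambda\circ u|^2G\,dx$ that appears in the lower bound (\ref{eq:Ip}) for $(I^\eps)'$: integrating in $R$ and letting $\eps\to 0$ as in Lemma~\ref{lem:mono} controls $\int_{R_1}^{R_2}R^3\int_{t=-R^2}|\partial_t v_k+\nabla v_k\cdot\frac{x}{2t}|^2G\,dx\,dR$, and since $|x/2t|$ is bounded on the cylinder this yields a uniform bound on $\|\partial_t v_k\|_{L^2(B_\Lambda\times(-R_2^2,-R_1^2))}$. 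Exhausting $\R^n\times\R_-$ by such cylinders and diagonalizing, a subsequence of $v_k$ converges weakly in $H^1_{\loc}(\R^n\times\R_-)$ and strongly in $L^2_{\loc}$ (Aubin--Lions) to some $\bar u$.

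The third step --- identifying the limit --- is the one I expect to be the main obstacle. The terms $\rho_k^2\lambda_j(t_0+\rho_k^2 t)v_{k,j}$ go to $0$ in $\mathcal{D}'$ since the $\lambda_j$ are uniformly bounded and $v_k$ is bounded in $L^2_{\loc}$. Testing the rescaled equation against $v_{k,j}\phi$ and using $v_{k,j}\mu_{k,j}=0$ gives $\int|\nabla v_{k,j}|^2\phi\to\int|\nabla\bar u_j|^2\phi$ for every $\phi\ge 0$, so the convergence is actually strong in $H^1_{\loc}$; and in the limit $\bar u$ maps into $\Sigma$, each $\bar u_j\ge 0$ is sub-caloric, and $\bar u_i\bar u_j=0$ for $i\ne j$. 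What remains is to show that the limiting interface measures $\bar\mu_j:=-(\partial_t\bar u_j-\Delta\bar u_j)\ge 0$ carry no mass, so that $\partial_t\bar u-\Delta\bar u=0$ in $\mathcal{D}'(\R^n\times\R_-)$. Here I would model the argument on the ``clean-up'' step for the stationary problems in \cite{CL2, CL3}, using the disjoint-support structure, the sub-caloricity of each $\bar u_j$ and of $|\bar u|=\sum_j\bar u_j$, and the constant value of the frequency from the next step; the difficulty is that the time dependence and the nonlocal origin of the measures $\nu_j$ make the purely elliptic reasoning less direct, and one must upgrade the (Theorem~\ref{thm:Lip}-based) bound on the masses of the $\mu_{k,j}$ to genuine non-concentration.

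Finally, homogeneity follows from constancy of the frequency. By Lemma~\ref{lem:mono} the limit $\alpha:=N(u,x_0,t_0)=\lim_{r\to 0}N(u,x_0,t_0,r)$ exists, so for each fixed $R$, $N(\bar u,0,0,R)=\lim_k N(v_k,0,0,R)=\lim_k N(u,x_0,t_0,\rho_k R)=\alpha$, where $N$ passes to the limit by the strong $H^1_{\loc}$ convergence as at the end of the proof of Lemma~\ref{lem:mono}. Thus $\bar u$ is caloric with constant frequency $\alpha$. Re-running the monotonicity computation with no error terms ($\lambda\equiv 0$, $F\equiv 0$), the relation $N'\equiv 0$ forces equality in the Cauchy--Schwarz step (\ref{eq:4R3}), so on a.e.\ time slice $\bar u_t+\nabla\bar u\cdot\frac{x}{2t}=\beta(t)\,\bar u$ for a scalar $\beta$; pairing this with $\bar u\,G$, using (\ref{eq:Hpr}) and $\frac{d}{dR}\log H(\bar u,R)=\frac{2}{R}N(\bar u,R)=\frac{2\alpha}{R}$, gives $\beta(t)=\frac{\alpha}{2t}$, i.e.\ $2t\,\partial_t\bar u+x\cdot\nabla\bar u=\alpha\bar u$. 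Integrating this relation along the parabolic dilations $(x,t)\mapsto(\rho x,\rho^2 t)$ yields $\bar u(\rho x,\rho^2 t)=\rho^\alpha\bar u(x,t)$, which completes the proof once the caloricity of Step~3 is in place.
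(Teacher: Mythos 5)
Your proposal follows the same route as the paper: extract a blow-up sequence, show it is uniformly bounded in $H^1_{\loc}$, pass to the limit to obtain a map $\bar u$ into $\Sigma$ solving the heat equation, observe that the frequency of $\bar u$ is the constant $\alpha=N(u,x_0,t_0)$, and use the constancy of the frequency together with the identities for $I$, $H'$, $I'$ (passed through from the $\eps$-approximation) to conclude $2t\bar u_t + x\cdot\nabla\bar u - \alpha\bar u = 0$. Your last step is phrased via equality in the Cauchy--Schwarz step, whereas the paper computes $\int_{R_1}^{R_2}\iR|2t\bar u_t+x\cdot\nabla\bar u-\alpha\bar u|^2G\,dx\,dR$ directly in terms of $RI'-4\alpha I+\alpha^2 H$ and shows this vanishes when $N'\equiv 0$; these are algebraically equivalent ways of exploiting the same equality case.

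Two substantive remarks. First, your insistence on the normalized blow-up $v_k = H(u,x_0,t_0,\rho_k)^{-1/2}u_{\rho_k}$ is correct and in fact fixes what looks like an omission in the paper: by Theorem~\ref{thm:Lip}, $H(u,x_0,t_0,R)\lesssim R^2$, so $H(u_{\rho_k},0,0,R)=H(u,x_0,t_0,\rho_k R)\to 0$ for fixed $R$, and the unnormalized sequence $u_{\rho_k}$ converges to $0$ in $L^2_{\loc}$ (and by the frequency bound also in $H^1_{\loc}$). The paper's subsequent use of $N(\bar u,R)$ and the homogeneity argument only make sense with the normalization, as does the application of the lemma to Lemma~\ref{lem:gap} and Theorem~\ref{thm:dim}, where $\bar u$ must be nontrivial. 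Second, you correctly flag the step ``$\bar u$ is caloric'' as the delicate point: after normalization the rescaled interface measures are $\mu_{k,j}\sim(\rho_k/d_k)\,\nu_j(\cdot)$, and with $d_k\sim\rho_k^\alpha$ and $\alpha\geq 1$ the crude mass bound $\nu_j(Q_r)\lesssim r^{n+1}$ (from the Lipschitz estimate) does not force these to vanish. The paper addresses this with the single sentence ``By writing the equation satisfied by $u_{\rho_k}$ and taking $\rho_k\to 0$, we see that $\bar u_t-\Delta\bar u = 0$,'' which is at least as terse as what you propose. Your observation that testing against $v_{k,j}\phi$ gives strong $H^1_{\loc}$ convergence and hence $\bar u_j(\partial_t\bar u_j-\Delta\bar u_j)=0$ is correct and useful, but the upgrade from ``$\bar\mu_j$ supported on $\{\bar u_j=0\}$'' to ``$\bar\mu_j\equiv 0$'' still requires an argument (e.g.\ a clean-up argument across the free interface, as you suggest); neither you nor the paper completes it here, so I would not count this against your proposal relative to the paper's own exposition.
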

 \begin{proof}
For any $\rho\geq 0$, it is straightforward to check that $u_\rho = u(x_0+\rho x,t_0+\rho^2 t)$ satisfies $N(u_\rho,0,0,r) = N(u,x_0,t_0,\rho r)$. Thus, as $\rho_k\rightarrow 0$, $u_{\rho_k}$ are uniformly bounded in $H_{\loc}^1(\R^n\times\R_-)$ and converge weakly up to a subsequence to some $\bar u:\R^n\times\R_-\rightarrow \Sigma$. By writing the equation satisfied by $u_{\rho_k}$ and taking $\rho_k \rightarrow 0$, we see that $\bar u(x,t)$ solves $ \bar u_t - \Delta \bar u = 0$ in the sense of distributions. 

We now demonstrate the parabolic homogeneity of $\bar u$. Note that, for all $R>0$,
\begin{equation*}
N(\bar u,0,0,R) = \lim_{\rho_k\rightarrow 0} N(u,x_0,t_0,\rho_kR) = N(u,x_0,t_0)\geq 1.
\end{equation*}
That is, $N(\bar u,R)$ is constant in $R$. Looking at the calculations for $u^\eps$ in the proof of Lemma \ref{lem:mono}, we can integrate (\ref{eq:IR}) in $R$ and pass to the limit as $\eps\rightarrow 0$. (As above, this can be justified using the strong $H_{\loc}^1(\Omega\times\R_+)$-convergence of $u^\eps$.) We obtain
\[ \int_{R_1}^{R_2}I(u,R)\,dR = -\int_{R_1}^{R_2}R^2\iR u\cdot\left(u_t-\lambda(t)\circ u + \nabla u\cdot\frac{x}{2t} \right)G\,dx\,dR.\]
Evaluating this expression for $u_{\rho_k}(x,t)=u(\rho_k x,\rho_k^2 t)$ and taking the limit as $u_{\rho_k}\rightharpoonup \bar u$ in $H^1(\R^n\times\R_-)$, we conclude
\begin{equation}\label{eq:I}
\int_{R_1}^{R_2}I(\bar u,R)\,dR = -\int_{R_1}^{R_2}R^2\iR \bar u\cdot\left(\bar u_t + \nabla \bar u\cdot\frac{x}{2t} \right)G\,dx\,dR.
\end{equation}
Applying the same procedure to (\ref{eq:Hpr}) and (\ref{eq:Ip}), we obtain
\begin{align}
\int_{R_1}^{R_2}\frac{d}{dR}H(\bar u,R)\,dR &= -\int_{R_1}^{R_2}4R\iR \bar u\cdot\left(\bar u_t+\nabla \bar u\cdot\frac{x}{2t}\right)G\,dx\,dR,\label{eq:dH}\\
\int_{R_1}^{R_2}\frac{d}{dR}I(\bar u,R)\,dR &= \int_{R_1}^{R_2}4R^3\iR \left|\bar u_t+\nabla \bar u\cdot\frac{x}{2t}\right|^2 G\,dx\,dR,\label{eq:dI}
\end{align}
for any $0<R_1<R_2$. We want to show $\bar u(\rho x,\rho^2 t) = \rho^\alpha\bar u(x,t)$, with $\alpha = N(u,x_0,t_0)$, i.e. that $\dfrac{d}{d\rho} \dfrac{\bar u(\rho x,\rho^2 t)}{\rho^\alpha} = 0$ for all $\rho>0$. By a rescaling, it suffices to demonstrate that $\dfrac{d}{d\rho}\Big|_{\rho=1} \dfrac{\bar u(\rho x,\rho^2 t)}{\rho^\alpha} = 2t\bar u_t+x\cdot\nabla \bar u -\alpha \bar u = 0$. Using (\ref{eq:I}), (\ref{eq:dH}), and (\ref{eq:dI}), we compute
\begin{align}\label{eq:intalpha}
\int_{R_1}^{R_2}\iR  |2t \bar u_t+&\nabla \bar u\cdot x - \alpha \bar u|^2 G\,dx\,dR\nonumber\\
  &= \int_{R_1}^{R_2}\left(R\frac{dI(\bar u,R)}{dR} - 4\alpha I(\bar u, R) + \alpha^2 H(\bar u,R)\right)dR.
\end{align}
Since $N(R) = N(\bar u,R)$ is constant, we have $N'(R)H(R) = I'(R)-\alpha H'(R)/2=0$. Thus, we conclude from (\ref{eq:I}) and (\ref{eq:dH}) that
\begin{equation*}
\int_{R_1}^{R_2}R\frac{d}{dR}I(\bar u,R)\,dR = \frac{\alpha}{2}\int_{R_1}^{R_2}R\frac{d}{dR}H(\bar u,R)\,dR = \int_{R_1}^{R_2}2\alpha I(\bar u,R)\,dR.
\end{equation*}
Because $\alpha^2H(R) = 2\alpha I(R)$, (\ref{eq:intalpha}) implies  $2t\bar u_t+x\cdot\nabla \bar u -\alpha \bar u = 0$.

 \end{proof}

The function $\bar u:\Omega\times\R_-\rightarrow\Sigma $, which depends on the base point $(x_0,t_0)$, is called a \emph{blow-up} or \emph{tangent map} of $u$. We are now ready to show a gap in the values of $N(u,x_0,t_0)$. 
 
 \begin{lemma}\label{lem:gap}
Let $(x_0,t_0)\in \Gamma$.  Then either $N(u,x_0,t_0) = 1$ or $N(u,x_0,t_0)\geq 1+\delta_n$ for some constant $\delta_n$ depending on the dimension.
 \end{lemma}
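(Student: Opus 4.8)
The plan is to argue by contradiction: suppose there is a sequence of points $(x_0^{(k)},t_0^{(k)})\in\Gamma$ — or, more efficiently, a fixed point $(x_0,t_0)\in\Gamma$ — with $1 < N(u,x_0,t_0) < 1+1/k$, and extract blow-ups. By Lemma~\ref{lem:blowup}, the rescalings $u_{\rho_k}$ converge (along a subsequence, weakly in $H^1_{\loc}$) to a parabolically $\alpha$-homogeneous caloric function $\bar u:\R^n\times\R_-\to\Sigma$ with $\alpha = N(u,x_0,t_0)$. The key structural fact is that each component $\bar u_j$ of $\bar u$ is a nonnegative solution of the heat equation on the open set $\{\bar u_j > 0\}$, these sets are mutually disjoint, and $\bar u$ vanishes on a nonempty set (since $(x_0,t_0)\in\Gamma$, we have $\bar u(0,0)=0$, and in fact by the homogeneity $\bar u$ vanishes on the whole backward paraboloid through $0$). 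The first step is therefore to classify, or at least constrain, parabolically homogeneous caloric functions of homogeneity degree strictly between $1$ and $2$ that arise as such blow-ups.

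The heart of the argument is to rule out homogeneity degree $\alpha\in(1,2)$. First I would reduce to the case where $\bar u$ has exactly two nontrivial components — if $\bar u$ has only one nontrivial component then $\bar u_1\geq 0$ is globally caloric and $\alpha$-homogeneous, forcing $\bar u_1$ to be a caloric polynomial of integer degree, contradicting $\alpha\notin\mathbb Z$; if three or more components are nontrivial, a dimension-reduction / monotonicity argument (in the spirit of \cite{CL3}) shows the frequency at a generic point of the singular stratum drops, so it suffices to treat the generic two-phase situation. For two phases, $w := \bar u_1 - \bar u_2$ is a function on $\R^n\times\R_-$ that is $\alpha$-homogeneous, harmonic-in-time-frozen on each phase, and one checks using the weak formulation (\ref{eq:weak}) that $w$ satisfies $w_t - \Delta w = 0$ in $\mathcal D'(\R^n\times\R_-)$: the measures $\nu_j$ are supported on the respective nodal sets, they have opposite effective signs in $w$, and a parabolic version of the Alt–Caffarelli–Friedman / two-phase matching argument forces them to cancel, so $w$ is globally caloric. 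Then $w$ is a caloric polynomial, hence $\alpha$ is an integer, contradicting $1<\alpha<2$. This already gives the gap with $\delta_n$ = any number less than $1$, but to get the sharp dimension-dependent $\delta_n$ one instead estimates the bottom of the spectrum of the natural eigenvalue problem on $S^{n-1}$ associated with the first homogeneity degree of an $\alpha$-homogeneous function whose restriction to the sphere changes sign and is "caloric-like" — the Poon frequency of a two-valued homogeneous caloric function is bounded below by $1+\delta_n$ where $\delta_n$ comes from the Friedland–Hayman inequality for the first eigenvalue of complementary spherical domains, exactly as in the elliptic case in \cite{CTV3,CL3}.

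The main obstacle I expect is justifying that the blow-up $\bar u$ is a genuine \emph{two-phase caloric} object — i.e., controlling the parabolic nodal set and the measures $\nu_j$ under the blow-up limit so that the limiting $w=\bar u_1-\bar u_2$ is globally caloric rather than merely a caloric function on each open phase with an a priori uncontrolled defect on the interface. In the elliptic setting this is handled by the ACF monotonicity formula; here one must replace it with its parabolic analogue (Poon-type for the product of two frequencies, or the Caffarelli–Jerison–Kenig parabolic two-phase formula) and combine it with Lemma~\ref{lem:Leb} (the interface has zero measure) to conclude that the weak equation for $w$ picks up no interface contribution. A secondary technical point is passing the homogeneity and the frequency value through the weak $H^1_{\loc}$ limit, but this is exactly the kind of integrate-in-$R$-then-take-$\eps\to0$ (here $\rho_k\to0$) argument already carried out in Lemma~\ref{lem:mono} and Lemma~\ref{lem:blowup}, so it is routine. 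Once $w$ is shown caloric, the integrality of the homogeneity degree and hence the gap $\delta_n$ follow from standard expansion in caloric polynomials together with the Friedland–Hayman estimate for the sharp constant.
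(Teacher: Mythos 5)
Your reduction to the two‑phase case is where the argument breaks down. You propose that if three or more components of $\bar u$ are nontrivial, ``a dimension‑reduction / monotonicity argument shows the frequency at a generic point of the singular stratum drops, so it suffices to treat the generic two‑phase situation.'' But the goal is to bound $N(u,x_0,t_0)$ at the \emph{given} base point, and dimension reduction at generic singular points (Federer's argument) tells you nothing about the frequency at the specific point you blew up at. Worse, the logic is circular: the frequency gap is precisely the input one needs before running dimension reduction on the singular set, not a consequence of it. The $d\geq 3$ case is the one that actually produces the dimension‑dependent constant $\delta_n$, and it cannot be folded into the $d=2$ case. The paper treats it directly: writing $\bar u(\rho\xi,\rho^2\tau)=\rho^\alpha w(\xi,\tau)$, so that $w$ satisfies $w_\tau-\Delta_\xi w = \alpha(\alpha+n-2)\,w$ on $\mathbb S^{n-1}\times\R_-$, one considers $\prod_{j=1}^d\int_{\mathbb S^{n-1}}w_j^2$, whose logarithmic $\tau$-derivative is governed by $-\frac1d\sum_j \lambda_1(B_j)+\alpha(\alpha+n-2)$. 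With $d\geq 3$, one support $B_{j_*}$ has measure $\leq|\mathbb S^{n-1}|/3$, so by symmetrization $\lambda_1(B_{j_*})\geq n-1+\eta_n$; if $\alpha(\alpha+n-2)$ were below $n-1+\eta_n/(2d)$ the product would decay exponentially in $\tau$, contradicting the exponential growth of $w$. This gives $\alpha\geq 1+\delta_n$ directly.

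Your treatment of $d=1$ (integer homogeneity of a nonnegative caloric polynomial) agrees with the paper. For $d=2$ you invoke a parabolic ACF/CJK two‑phase monotonicity argument to cancel possible interface defect measures, but this is more machinery than needed: Lemma~\ref{lem:blowup} already gives that $\bar u$ solves the homogeneous heat equation in $\mathcal D'(\R^n\times\R_-)$, from which the normal‑derivative matching $|\partial_\nu\bar u_1|=|\partial_\nu\bar u_2|$ on $\partial A_1\cap\partial A_2$ follows by testing the weak form; one then glues to a single caloric function $U=\bar u_1\mathbf 1_{A_1}-\bar u_2\mathbf 1_{A_2}$ and reads off integer homogeneity. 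Your invocation of Friedland--Hayman for the ``sharp $\delta_n$'' is also misplaced for $d=2$ (where the gap is simply $1$); the eigenvalue estimate belongs to $d\geq 3$. In short: fine for $d\leq 2$, but you are missing the eigenvalue argument that actually produces $\delta_n$, and your substitute for it is circular.
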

 \begin{proof} 
Let $\bar u:\R^n\times\R_-\rightarrow\Sigma$ be a blow-up with base point $(x_0,t_0)$ and homogeneity $\alpha=N(u,x_0,t_0)\geq 1$. The image of $\bar u$ lies in $d$ lines of $\Sigma$ for some $1\leq d\leq m$.

First suppose $d=1$. In this case, $\bar u$ is a scalar-valued caloric function, and the parabolic frequency $\alpha=N(u,x_0,t_0)$ is either 1 or at least 2.

If $d=2$, we may assume that $\bar u=(\bar u_1,\bar u_2,0,\ldots,0)$. Let $A_i\subset\R^n$ be the set where $\bar u_i>0$, $i=1,2$. (It is clear that $A_i$ will not change with $t$.) Define a function $U$ on $\R^n\times\R_-$ by
\begin{equation*}
U(x,t)= \begin{cases}
		\bar u_1, &x\in A_1\\
		-\bar u_2, &x\in A_2.
		\end{cases}
\end{equation*}
The scalar function $U$ solves $U_t=\Delta U$ in the interiors of $A_1$ and $A_2$ because $\bar u_1$ and $\bar u_2$ do, respectively.  Because the image $\bar u(\R^n\times \R_-)$ lies in two half-line segments, we see from the weak form of $\bar u_t - \Delta \bar u = 0$, with an appropriate test function, that $\left|\dfrac{\partial\bar u_1}{\partial\nu}\right|=\left|\dfrac{\partial\bar u_2}{\partial\nu}\right|$ on $\partial A_1\cap\partial A_2$, where $\nu$ is the unit normal vector to $\partial A_1\cap\partial A_2$.  Hence $U$ is a caloric function, and we conclude either $\alpha=1$ or $\alpha\geq2$.

Finally, if $d\geq3$, then as above we may assume $\bar u=(\bar u_1,\ldots,\bar u_d,0,\ldots,0)$. We write $\bar u(x,t) = \bar u(\rho\xi,\rho^2\tau) = \rho^\alpha w(\xi,\tau)$, where $w(\xi,\tau)$ maps from $\mathbb S^{n-1}\times\R_-$ into $\Sigma$. Since $\bar u_t-\Delta \bar u=0$, we see by an easy computation
\begin{equation}\label{eq:sphere}
w_\tau-\Delta_\xi w = c(n,\alpha) w \text{ in }\mathbb{S}^{n-1}\times\R_-,
\end{equation}
with $c(n,\alpha)=\alpha(\alpha+n-2)$. Define $B_j=\{w_j>0\}\subset\mathbb S^{n-1}$, $j=1,\ldots,m$.

Consider the quantity $\prod_{j=1}^d(\int_{\mathbb{S}^{n-1}}w_j^2)$.  Differentiating in $\tau$ and using (\ref{eq:sphere}), we have
\begin{equation}\label{eq:log}
 \frac{1}{2d}\frac{\partial}{\partial\tau}\log\prod_{j=1}^d\left(\int_{\mathbb{S}^{n-1}}w_j^2\right) = -\frac{1}{d}\sum_{j=1}^d \frac{\int_{\mathbb{S}^{n-1}}|\nabla w_j|^2}{\int_{\mathbb{S}^{n-1}}w_j^2} + c(n,\alpha).
\end{equation}
Since $d\geq 3$, $B_{j_*}$ has volume $\leq \frac{1}{3}|\mathbb{S}^{n-1}|$ for some $j_*$.  A simple eigenvalue estimate, using spherical symmetrization and monotonicity of the first eigenvalue with respect to the domain, gives $\lambda_1(B_{j_*})\geq n-1+\eta_n$, from which we conclude
\[\frac{1}{d}\sum_{j=1}^d \lambda_1(B_j) 
\geq n-1+\frac{\eta_n}{d}.\]
If $c(n,\alpha) < n-1+\dfrac{\eta_n}{2d}$, then the right-hand side of (\ref{eq:log}) is uniformly negative, which implies
\[\prod_{j=1}^d\int_{\mathbb{S}^{n-1}}w_j^2(\cdot,\tau)\leq c_1 e^{-c_2\tau}.\]
However, as a solution of (\ref{eq:sphere}) with $c(n,\alpha)>0$, $w(\xi,\tau)$ grows exponentially in $\tau$. We conclude $c(n,\alpha) = \alpha(\alpha+n-2)\geq n-1+\dfrac{\eta_n}{6}$, or $\alpha\geq 1+\delta_n$.

\end{proof}

Define $\Gamma^* \subset \Gamma$ by $\Gamma^* = \{(x_0,t_0)\in\Gamma : N(x_0,t_0) = 1\}$.  Let $S = \Gamma \setminus \Gamma^*$. We see that $S$ is closed in $\Omega$ because $N(x,t)\geq 1+\delta_n$ on $S$ by Lemma \ref{lem:gap}, and $N(x,t)$ is upper-semicontinuous in $(x,t)$.

Our next step is to show that the singular set $S$ is of parabolic Hausdorff dimension at most $n$. For any $R>0$, denote the parabolic cylinders $Q_R(x,t)=B_R(x)\times[t-R^2,t]$ and $Q_R=Q_R(0,0)$. Recall that the $s$-dimensional parabolic Hausdorff measure of a set $E\subset\R^n\times\R$ is defined by
\[\mathcal P_s(E) = \sup_{\delta>0}\,\, \inf \left\{\sum_{i=1}^\infty R_i^s \,\Big|\, E\subseteq \bigcup_{i=1}^\infty Q_{R_i}(x_i,t_i), R_i \leq \delta\right\}.\]
The parabolic Hausdorff dimension of $E$ is given by $\dim_P(E)=\inf\{s|\mathcal P_s(E)=0\}$. 
Since $\mathcal{P}^{n+2}$ is equivalent to the Lebesgue measure $\mathcal{L}^{n+1}$ on subsets of $\R^n\times\R$, Theorem \ref{thm:dim} identifies $S$ as a codimension-2 set.

We now state the version of Federer's Reduction Principle that we will need in the proof of Theorem \ref{thm:dim}. It is a straightforward generalization to the parabolic setting of the version stated in \cite[page~49]{L} for Euclidean Hausdorff dimension.

\begin{theorem}\label{thm:fed}
Let $\mathcal F\subset L_{\loc}^1(\R^n\times\R_+,\R^m)$ be a family of functions satisfying the following properties:
\begin{enumerate}
\item[(H1)] (Closure under translation and scaling) If $\phi\in\mathcal F$ and $(x,t)\in Q_{1-\rho}$, $0<\rho<1$, then $\phi^{x,t,\rho}\in \mathcal F$, where $\phi^{x,t,\rho}(y,s) = \phi(x+\rho y,t+\rho^2 s)$.
\item[(H2)] (Existence of homogeneous blow-ups) For any $(x_0,t_0)\in Q_1$, $\rho_k\rightarrow 0$, and $\phi\in\mathcal F$, then $\phi^{0,0,\rho_k} \rightharpoonup \psi$ in $\mathcal D ' (\R^n\times \R)$ for some $\psi\in\mathcal F$, up to a subsequence.
\item[(H3)] (Singular set hypothesis) For each $\phi\in\mathcal F$, there exists a closed set $S_\phi\subset \R^n\times\R$ such that
\begin{enumerate}
\item[(a)] For $(x,t)\in Q_{1-\rho}$, $0<\rho<1$, then $S_{\phi^{x,t,\rho}} = P_{x,t,\rho}(S_\phi)$, where the parabolic dilation 
 \[ P_{x,t,\rho}(y,s)=\left(\frac{y-x}{\rho},\frac{s-t}{\rho^2}\right).\]
\item[(b)] If $\phi_k,\phi\in\mathcal F$ such that $\phi_k\rightharpoonup\phi$ in $\mathcal D ' (\R^n\times \R)$, then $S_{\phi_k}$ converges to $S_{\phi}$ in the following sense: for any $\eps>0$, there exists and integer $K$ such that if $k\geq K$, 
\[S_{\phi_k}\cap Q_1 \subset \{(x,t)\in \R^n\times \R \,|\, d_P((x,t),S_\phi)<\eps\},\]
where $d_P$ is the parabolic distance.
\end{enumerate}
\end{enumerate}
Then, either $S_\phi\cap Q_1=\null$ for every $\phi\in\mathcal F$, or there exists an integer $d\in[0,n+1]$ such that $\dim_P(S_\phi\cap Q_1)\leq d$ for all $\phi\in\mathcal F$. In the latter case, there exists a $\psi\in\mathcal F$ and a ``hyperplane'' $L\subset \R^n\times\R_+$ (i.e. a set such that $P_{0,0,\rho}(L)=L$ for all $\rho>0$) with $\dim_P(L) = d$, such that $\psi^{x,t,\rho}=\psi$ for every $(x,t)\in L$ and $\rho>0$, and $S_\psi=L$.
\end{theorem}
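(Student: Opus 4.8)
The plan is to follow the standard Federer dimension-reduction scheme, adapting it verbatim to the parabolic scaling $P_{x,t,\rho}$ in place of Euclidean dilations. The proof is by downward induction on the ambient dimension $n+1$. First I would dispose of the trivial case: if $S_\phi\cap Q_1=\emptyset$ for every $\phi\in\mathcal F$, there is nothing to prove, so assume some $\phi_0\in\mathcal F$ has $S_{\phi_0}\cap Q_1\neq\emptyset$. I would then \emph{define} $d$ to be the largest integer in $[0,n+1]$ for which there exists $\phi\in\mathcal F$ with $\mathcal P_d(S_\phi\cap Q_1)>0$; this is well-defined because $\mathcal P_{n+2}$ is comparable to $\mathcal L^{n+1}$ and $S_\phi$ is a closed proper subset in the nontrivial cases of interest (and if no such $d$ exists because all $\mathcal P_s$ vanish, the conclusion $\dim_P(S_\phi\cap Q_1)=0\le d$ holds with $d=0$). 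The goal is then twofold: (i) show $\dim_P(S_\phi\cap Q_1)\le d$ for all $\phi\in\mathcal F$, and (ii) produce the extremal $\psi$ and the homogeneous subspace $L$.

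For part (i), suppose toward a contradiction that some $\phi\in\mathcal F$ has $\mathcal P_s(S_\phi\cap Q_1)>0$ for some $s>d$. The workhorse is a density/blow-up argument: by the parabolic analogue of the upper-density estimate for Hausdorff measures, $\mathcal P_s$-a.e.\ point $(x,t)$ of $S_\phi$ has positive upper $s$-density, i.e.\ $\limsup_{\rho\to0}\rho^{-s}\mathcal P_s\big(S_\phi\cap Q_\rho(x,t)\big)>0$. Fix such a point; by (H1) the rescalings $\phi^{x,t,\rho_k}$ lie in $\mathcal F$, by (H2) a subsequence converges in $\mathcal D'$ to some $\psi\in\mathcal F$, and by (H3)(a) the singular sets rescale as $S_{\phi^{x,t,\rho_k}}=P_{x,t,\rho_k}(S_\phi)$. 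The upper-semicontinuity (H3)(b) together with the lower-semicontinuity of $\mathcal P_s$ under the Hausdorff-type convergence of sets (a standard covering argument: any finite cover of $S_\psi\cap Q_1$ by parabolic cylinders eventually covers $S_{\phi_k}\cap Q_1$ after a slight enlargement of radii) transfers the positive density to $S_\psi$, giving $\mathcal P_s(S_\psi\cap Q_1)>0$, contradicting the maximality of $d$ since $s>d$. This already yields $\dim_P(S_\phi\cap Q_1)\le d$ for all $\phi$.

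For part (ii), one performs the reduction itself. Start with $\phi^{(0)}\in\mathcal F$ realizing $\mathcal P_d(S_{\phi^{(0)}}\cap Q_1)>0$, pick a point $(x,t)\in S_{\phi^{(0)}}$ of positive upper $d$-density, and blow up as above to obtain $\phi^{(1)}\in\mathcal F$ with $\mathcal P_d(S_{\phi^{(1)}}\cap Q_1)>0$; moreover, by rescaling the density information, $S_{\phi^{(1)}}$ is \emph{parabolically homogeneous}, $S_{(\phi^{(1)})^{0,0,\rho}}=S_{\phi^{(1)}}$ for all $\rho>0$, which by (H3)(a) forces $P_{0,0,\rho}(S_{\phi^{(1)}})=S_{\phi^{(1)}}$ and hence $0\in S_{\phi^{(1)}}$ is a cone point. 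Now iterate: if $S_{\phi^{(1)}}$ is not already a $d$-dimensional ``hyperplane'' $L$ (i.e.\ not translation-invariant along a $d$-dimensional homogeneous subspace), then the set of translation-invariance directions is a proper homogeneous subspace, one locates another positive-density point off the spine, blows up again to get $\phi^{(2)}\in\mathcal F$ whose singular set is homogeneous \emph{and} invariant under a strictly larger homogeneous subspace; since $\mathcal P_d$ stays positive, the dimension of this invariance subspace can only increase and is bounded by $d$, so after at most $d$ steps the process stabilizes at some $\psi\in\mathcal F$ with $S_\psi=L$, a $d$-dimensional homogeneous subspace, on which $\psi^{x,t,\rho}=\psi$ for all $(x,t)\in L$, $\rho>0$. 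The main obstacle I anticipate is the careful bookkeeping in this iteration step in the parabolic setting: the notion of ``homogeneous subspace'' must respect the anisotropic scaling (the time direction scales like $\rho^2$), so one must verify that the invariance directions of a $P$-homogeneous set again form a $P$-homogeneous subspace and that each blow-up genuinely enlarges it — the Euclidean argument in \cite{L} goes through, but the parabolic dilation $P_{x,t,\rho}$ makes the linear-algebra step slightly less transparent, and one must also confirm that $\mathcal P_d$-positivity is preserved at every blow-up, which is exactly where hypothesis (H3)(b) combined with the density lower bound is used.
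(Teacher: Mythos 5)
The paper does not actually prove Theorem~\ref{thm:fed}: it is stated without proof, characterized only as a ``straightforward generalization to the parabolic setting'' of the Euclidean reduction principle in~\cite[p.~49]{L}, and then used as a black box in Theorem~\ref{thm:dim}. So there is no proof in the paper to compare against; what you have written is an independent reconstruction of the Federer dimension-reduction scheme, adapted to parabolic dilations, and in outline it is the right argument.

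There is, however, a logical wrinkle in the way you have separated parts~(i) and~(ii). You define $d$ to be the largest \emph{integer} for which some $\phi\in\mathcal F$ has $\mathcal P_d(S_\phi\cap Q_1)>0$, then for part~(i) you suppose $\mathcal P_s(S_\phi\cap Q_1)>0$ for some (possibly non-integer) $s>d$, blow up to transfer positive density to a limit $\psi$, and declare a contradiction ``with the maximality of $d$''. But $\mathcal P_s(S_\psi)>0$ for a non-integer $s$ with $d<s<d+1$ is entirely compatible with $\mathcal P_{d+1}(S_\phi)=0$ for all $\phi$; a single blow-up produces no contradiction. What closes the gap is precisely the iteration in your part~(ii): one must keep blowing up until $S_\psi$ is a $P$-homogeneous, translation-invariant set $L$ of \emph{integer} parabolic dimension, at which point $\mathcal P_s(L)>0$ forces $\dim_P L\geq\lceil s\rceil\geq d+1$ and only then contradicts the definition of $d$. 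So part~(i) is a consequence of part~(ii), not a prerequisite, and the sentence ``This already yields $\dim_P(S_\phi\cap Q_1)\leq d$'' should be deleted or deferred. (A second, minor point: the mechanism you invoke for transferring positive density to the blow-up — a finite cover of $S_\psi$ eventually covers $S_{\phi_k}$ after enlargement — is \emph{upper} semicontinuity of the Hausdorff \emph{content} $\mathcal P^\infty_s$ along the one-sided set convergence of (H3)(b), combined with $\mathcal P_s(E)>0\Leftrightarrow\mathcal P^\infty_s(E)>0$ for compact $E$; calling it ``lower semicontinuity of $\mathcal P_s$'' is a misnomer, though the covering argument you sketch is the correct one.)
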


 \begin{theorem}\label{thm:dim}
  The parabolic Hausdorff dimension $\dim_P(S)\leq n$.
 \end{theorem}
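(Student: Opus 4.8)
The plan is to deduce the bound $\dim_P(S)\le n$ from the parabolic Federer Reduction Principle, Theorem~\ref{thm:fed}. First I would choose the family $\mathcal F$: let it consist of all (appropriately normalized) parabolic rescalings
\[
v_{x_0,t_0,\rho}(y,s)=\frac{v(x_0+\rho y,\,t_0+\rho^2 s)}{\sqrt{H(v,x_0,t_0,\rho)}},\qquad \rho>0,
\]
of suitable weak solutions $v$ of (\ref{eq:weak}), together with all of their weak $H^1_{\loc}$-limits. By Lemma~\ref{lem:blowup} — and by the same computations with the error term absent, applied to functions that are already caloric — the remaining elements of $\mathcal F$ are $H^1_{\loc}$ caloric maps into $\Sigma$, and the blow-up limits arising in (H2) are parabolically homogeneous such maps. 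For $\phi\in\mathcal F$ put $\Gamma_\phi=\{\phi=0\}$ and
\[
S_\phi=\{(x,t)\in\Gamma_\phi:N(\phi,x,t)\ge 1+\delta_n\},
\]
which by Lemma~\ref{lem:gap} equals $\Gamma_\phi\setminus\Gamma_\phi^*$ and is closed by the upper semicontinuity of the frequency established in Sect.~\ref{sec:LC}; note $S_{v_{x_0,t_0,\rho}}=P_{x_0,t_0,\rho}(S)$. Hypothesis (H1) holds by construction of $\mathcal F$; (H2) is Lemma~\ref{lem:blowup} together with the analogous (simpler) statement for caloric $\Sigma$-maps, the $\mathcal D'$-compactness coming from the uniform local Lipschitz bound of Theorem~\ref{thm:Lip} for the rescaled solutions — whose Lipschitz constants are uniform in the scale — and from the normalization, which keeps each element bounded away from $0$ in $H(\cdot,0,0,1)$; and (H3)(a) is immediate from the identities $N(\phi^{x,t,\rho},0,0,r)=N(\phi,x,t,\rho r)$ (and its translates) and $\Gamma_{\phi^{x,t,\rho}}=P_{x,t,\rho}(\Gamma_\phi)$.

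The substantive point, and the step I expect to be the main obstacle, is (H3)(b). Suppose $\phi_k\rightharpoonup\phi$ in $\mathcal D'$ with $\phi_k,\phi\in\mathcal F$ and $(x_k,t_k)\in S_{\phi_k}\cap Q_1$, $(x_k,t_k)\to(x_*,t_*)$; I must show $(x_*,t_*)\in S_\phi$. The uniform local Lipschitz/interior caloric estimates upgrade the weak convergence to locally uniform convergence, so $\phi(x_*,t_*)=\lim_k\phi_k(x_k,t_k)=0$, i.e.\ $(x_*,t_*)\in\Gamma_\phi$. For the frequency, I would fix a small $R>0$ and combine the monotonicity of Lemma~\ref{lem:mono} with the convergence $N(\phi_k,x_k,t_k,R)\to N(\phi,x_*,t_*,R)$ of the Gaussian-weighted integrals $I$ and $H$ (the weights $G_{x_k,t_k}$ converge to $G_{x_*,t_*}$, and $H(\phi,x_*,t_*,R)>0$ since $\phi$ vanishes on no open set, by Lemma~\ref{lem:Leb}, unique continuation, and the normalization) to obtain
\[
1+\delta_n\le N(\phi_k,x_k,t_k)\le N(\phi_k,x_k,t_k,R)+CR^4\longrightarrow N(\phi,x_*,t_*,R)+CR^4\qquad(k\to\infty),
\]
and then let $R\to 0$, using $N(\phi,x_*,t_*,R)\to N(\phi,x_*,t_*)$, to conclude $N(\phi,x_*,t_*)\ge 1+\delta_n$, hence $(x_*,t_*)\in S_\phi$. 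The care here goes into controlling the frequency across a weak limit with a moving base point and precluding collapse of the mass $H$.

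With (H1)--(H3) in hand, Theorem~\ref{thm:fed} gives either $S_\phi\cap Q_1=\emptyset$ for every $\phi\in\mathcal F$ — in which case $S=\emptyset$ and there is nothing to prove — or an integer $d\in[0,n+1]$ with $\dim_P(S_\phi\cap Q_1)\le d$ for all $\phi\in\mathcal F$. To finish I would rule out $d=n+1$. In that case there would be $\psi\in\mathcal F$ and a closed, parabolically dilation-invariant set $L$ with $\dim_P(L)=n+1$, $S_\psi=L$, and $\psi^{x,t,\rho}=\psi$ for all $(x,t)\in L$ and $\rho>0$. Since $\dim_P$ counts the time variable twice, $\dim_P(L)=n+1$ forces the translation group generated by $L$ to contain the whole time axis together with an $(n-1)$-dimensional spatial subspace; hence $\psi$ is independent of $t$ and of all but at most one spatial variable. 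As $L$ is closed and dilation-invariant it contains the origin, so $\psi^{0,0,\rho}=\psi$, i.e.\ $\psi$ is parabolically homogeneous of degree $0$ and therefore constant on each of the (at most two) complementary spatial half-spaces cut out by the remaining variable. A $\Sigma$-valued $H^1_{\loc}$ function that is constant on complementary half-spaces must take the same value on both, so $\psi\equiv c$ for some $c\in\Sigma$; but $c=0$ is excluded by the normalization, and if $c\ne 0$ then $\Gamma_\psi=\emptyset$, so $S_\psi=\emptyset\ne L$ — a contradiction. Hence $d\le n$. Finally, covering $\Omega\times\R_+$ by countably many parabolic cylinders $Q_\rho(x_0,t_0)$ on each of which the rescaling $u_{x_0,t_0,\rho}$ of the given solution $u$ belongs to $\mathcal F$, and noting that $S\cap Q_\rho(x_0,t_0)$ is a parabolic dilate of $S_{u_{x_0,t_0,\rho}}\cap Q_1$ (a dilation that preserves $\dim_P$), together with the countable stability of parabolic Hausdorff dimension, gives $\dim_P(S)\le d\le n$.
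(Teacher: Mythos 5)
Your overall strategy matches the paper's: both proofs invoke the parabolic Federer Reduction Principle (Theorem~\ref{thm:fed}) with a suitable family $\mathcal F$, verify (H1)--(H3), and then exclude the case $d=n+1$. Your family $\mathcal F$ (normalized rescalings of weak solutions together with their weak limits) is cosmetically different from the paper's $\mathcal A_{N_0}$ (pairs $(u,\lambda)$ satisfying $u_j(\partial_t u_j-\Delta u_j)=\lambda_j(t)u_j^2$ with $\|\lambda\|_{L^\infty}\le N_0$ and $N(u,0,0,1)\le N_0$, so that the nonlocal constraint can be dropped and scaling becomes painless), and your verification of (H1)--(H3) uses the same ingredients (Lemma~\ref{lem:blowup}, upper semicontinuity of $N$, the Lipschitz estimate). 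That part is fine.

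The genuine problem is in how you rule out $d=n+1$. You deduce from $\psi^{0,0,\rho}=\psi$ that $\psi$ is parabolically homogeneous of \emph{degree zero}, and hence constant on each half-space cut out by the remaining variable. But the rescaling in Theorem~\ref{thm:fed} must be the \emph{normalized} one (dividing by $\sqrt{H(\cdot,0,0,\rho)}$, exactly as you yourself built into $\mathcal F$) for (H2) to produce nontrivial blow-ups at all; under that reading, $\psi^{0,0,\rho}=\psi$ encodes degree-$\alpha$ parabolic homogeneity with $\alpha=N(\psi,0,0)\ge 1+\delta_n$, since $(0,0)\in L=S_\psi$. So $\psi$ is not constant on the two half-lines of the remaining variable $y_1$; rather $\psi(y_1)=|y_1|^\alpha a_\pm$. (If instead one insists on the literal unnormalized $\phi^{x,t,\rho}$, then degree-$0$ homogeneity together with $\psi(0,0)=0$ and local Lipschitz continuity already forces $\psi\equiv 0$, so your half-space discussion is moot and the normalization step alone gives the contradiction --- but (H2) would then be vacuous, which is not the intended reading.) As it stands, the step ``$\psi$ constant on half-spaces, hence $\psi\equiv c$'' is incorrect.

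Both this step and the gap can be repaired, and it is instructive to compare with the paper. The paper's argument is shorter and avoids analyzing the structure of $L$ at all: at every point of $S_\psi$ the parabolic vanishing order of $\psi$ exceeds $1$, so $\psi=0$ and $\nabla\psi=0$ on the (Euclidean) hypersurface $S_\psi=L$ of dimension $n$; the $L$-invariance forces $L$ to be non-characteristic, so uniqueness for the Cauchy problem for the heat equation gives $\psi\equiv 0$ in a neighborhood of $L$, contradicting the normalization (equivalently, yielding $\dim_P S_\psi=n+2$). Your structural route can be salvaged instead as follows: once you know $\psi$ is a $t$-independent, caloric, degree-$\alpha$ homogeneous $\Sigma$-map depending only on a single spatial variable $y_1$, the equation reduces to $\psi''=0$ on each half-line, forcing $\alpha=1$ (else $\psi\equiv 0$); then $N(\psi,0,0)=1$, contradicting $(0,0)\in S_\psi$. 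Either repair closes the gap, but you should not conclude that $\psi$ is constant.
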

 \begin{proof}
We will focus on $S\cap Q_1(x_0,t_0)$ for $(x_0,t_0)\in S$. Assume $(x_0,t_0) = (0,0)$. For $N_0>0$, define the family $\mathcal{A}_{N_0}$ as follows: a pair $(u,\lambda)$, where $u\in H^1(\Omega\times [-1,0],\Sigma)$ and $\lambda\in L^\infty([-1,0], \R^m)$, is a member of $\mathcal A_{N_0}$ if
\begin{equation*}
\begin{cases}
u_j(\partial_t u_j - \Delta u_j) = \lambda_j(t)u_j^2  &\mbox{in } \mathcal{D}'(\Omega\times [-1,0]),\\
\|\lambda_j(t)\|_{L^\infty}\leq N_0,  &j=1,\ldots m.\\
N(u,0,0,1)\leq N_0
\end{cases}
\end{equation*}
Note that we do not require $\lambda_j(t)=\int_\Omega |\nabla u_j|^2\,dx$ in the definition of $\mathcal{A}_{N_0}$, but if $u$ is a suitable weak solution of (\ref{eq:weak}) restricted to $t\in [-1,0]$ and $\lambda_j(t) = \int_\Omega |\nabla u_j|^2\,dx$, then $(u,\lambda)\in\mathcal{A}_{N_0}$ for $N_0$ large enough.  

This family is compact in the following sense.  For a sequence $\{(u^i,\lambda^i)\}\in\mathcal{A}_{N_0}$, there is a subsequence $\{(u^{i'},\lambda^{i'})\}$ such that $u^{i'}\rightarrow u^*$ weakly in $H_{\loc}^1(\Omega\times [-1,0],\Sigma)$, $\lambda^{i'}\rightarrow\lambda^*$ weak-$*$ in $L^\infty([-1,0],\R^m)$, and we have $u_j^*(\partial_t u_j^* - \Delta u_j^*) = \lambda_j^*(t)(u_j^*)^2$ in $\mathcal D'(\Omega\times [-1,0])$ with $N(u^*,0,0,1)$ and $\|\lambda^*\|_{L^\infty}\leq N_0$.  Since $\{u^i\}$ are locally Lipschitz in the $x$ variable uniformly in $i$, $u^*$ is also Lipschitz in $x$.

Consider the family $\mathcal{F} = \{u | (u,\lambda)\in\mathcal A_{N_0} \mbox{ for some }\lambda\}$. (H1) and (H2) in Theorem \ref{thm:fed} are easy to verify for $\mathcal F$, in light of Lemma \ref{lem:blowup}. 

For $u\in \mathcal F$, let $S_u=\{(x,t)\in Q_1 \,|\, N(u,x,t)\geq 1+~\delta_n\}$. To verify (a) of (H3), take $(u,\lambda)\in \mathcal{A}_{N_0}$ and consider $u^{x,t,\rho}(y,s)=u(x+\rho y,t+\rho^2 s)$.  If $(x,t)\in Q_R$, then $u^{x,t,\rho}$ solves our equation with a new $\lambda$, say $\lambda^\rho$. Since $0<\rho<1$, $(u^\rho,\lambda^\rho)$ belongs to $\mathcal{A}_{N_0}$.  Clearly, the parabolic dilation $P_{x,t,\rho}(S_u)\subset S_{u^\rho}.$ Condition (b) of (H3) follows from the compactness of solutions described above.

Theorem \ref{thm:fed} implies $\dim_P(S_u)\leq d$ for all $u\in\mathcal{F}$, for some integer $d\in [0,n+1]$.  Assume that $d=n+1$. Then Theorem \ref{thm:fed} gives the existence of a $\psi\in \mathcal F$ with $S_\psi=L$, a ``hyperplane'' of parabolic Hausdorff dimension $n+1$. From the invariance of $L$ under parabolic dilation, we see that $L$ is a hypersurface of Euclidean Hausdorff dimension $n$. Since the parabolic vanishing order of $\psi$ is strictly greater than $1$ at each point of $S_\psi$, we have that $\nabla \psi=0$ and $\psi=0$ on an analytic hypersurface of Euclidean dimension $\geq n$. Uniqueness of the Cauchy problem implies $\psi\equiv0$ and $\nabla\psi\equiv 0$ in an open neighborhood of $S_\psi$, i.e. in a set with parabolic Hausdorff dimension $n+2$, a contradiction.

Thus, $\dim_P(S_u\cap Q_1)\leq n$ for every $u\in\mathcal F$, in particular for $u$ a suitable weak solution of (\ref{eq:weak}).

 \end{proof}

Our next goal is to characterize $\Gamma^*=\{(x,t)\in\Omega\times\R_+|u(x,t)=0,N(u,x,t)=1\}$ as a $C^{2,\beta}$ surface. Note that if $\nabla u(x_0,t_0)=0$ at a point $(x_0,t_0)\in\Gamma$, then $N(x_0,t_0)>1$ and $(x_0,t_0)\in S$ by Lemma \ref{lem:gap}. This rules out the possibility of $\Gamma^*$ having a horizontal tangent.

We now examine the behavior of $u$ near a point $(x_0,t_0)$ of $\Gamma^*$. (We may assume $(x_0,t_0)=(0,0)$ and that $u$ is defined on $\Omega\times [-2,\infty)$.)  For a vector $\vec{e}\in\mathbb{S}^{n-1}$, let $T(\vec{e},\eps)$ be the following subset of $Q_1$:
\[T(\vec{e},\eps) = \left\{(x,t)\in Q_1\,\Big|\, |x\cdot\vec{e}|\leq\eps\right\},\]
i.e. the Cartesian product of the $\eps$-strip in the $\vec{e}$ direction passing $x=0$, with the time interval $[-1,0]$. Additionally, define two subsets $Q_+^e(0,0)$ and $Q_-^e(0,0)$ by
\[Q_\pm^e(0,0) = \left\{x\in Q_1\,|\,x\cdot\vec{e}\gtrless 0\right\}.\]

\begin{lemma}\label{lem:flat}
Let $u: \Omega\times [-2,\infty)\rightarrow\Sigma$ be a suitable weak solution as in \textup{(\ref{eq:weak})}, and such that $(0,0)\in\Gamma^*$ and $H(u,0,0,2)=1$. Then for every $\eps\in(0,1)$, there is a $\delta(\eps)>0$ such that if $N(u,0,0,2)\leq 1+\delta(\eps)$, then $u$ can be well-approximated by functions linear in $x$ and constant in $t$ in $Q_1\setminus T(\vec{e},\eps)$, for some $\vec{e}\in\mathbb{S}^{n-1}$.  By this we mean:
\begin{align*}
& \|u(x,t)-\ell_\eps^\pm(x)\|\leq\eps \quad \mbox {in } C^{0,1}(Q_\pm^e(0,0)\setminus T(\vec{e},\eps)),\\
& \ell_\eps^\pm(x)= a_\eps^\pm (x\cdot\vec{e}), \quad a_\eps^\pm\in\Sigma \mbox{ with } 1\leq d_\Sigma(a_\eps^\pm,0)\leq 2.
\end{align*}
\end{lemma}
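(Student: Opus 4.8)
The plan is to argue by contradiction using a compactness/blow-up argument, exactly in the spirit of the proof of Lemma~\ref{lem:gap} and Theorem~\ref{thm:dim}. Suppose the statement fails: then there is an $\eps_0\in(0,1)$ and a sequence of suitable weak solutions $u^k:\Omega\times[-2,\infty)\to\Sigma$ with $(0,0)\in\Gamma^*$ for $u^k$, $H(u^k,0,0,2)=1$, and $N(u^k,0,0,2)\le 1+\tfrac1k$, but such that no $u^k$ admits an approximation by a linear-in-$x$, constant-in-$t$ map to within $\eps_0$ in $C^{0,1}(Q_\pm^e(0,0)\setminus T(\vec e,\eps_0))$ for any $\vec e\in\mathbb S^{n-1}$. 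By the uniform local Lipschitz bound in $x$ from Theorem~\ref{thm:Lip} (which holds uniformly over the class $\mathcal A_{N_0}$ as noted in the proof of Theorem~\ref{thm:dim}) and the $H^1_{\loc}$ energy bound coming from $N(u^k,0,0,2)\le 2$ together with $H(u^k,0,0,2)=1$, the $u^k$ are bounded in $H^1_{\loc}(\Omega\times[-2,0])$ and equi-Lipschitz in $x$ on compact subsets; pass to a subsequence converging weakly in $H^1_{\loc}$, and locally uniformly on $\overline{Q_1}$ (after the $x$-Lipschitz bound plus an interior parabolic estimate away from $\Gamma$ gives enough compactness in $t$ as well), to a limit $\bar u$.

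Next I would identify $\bar u$. By the lower semicontinuity of $H$ from below (which holds since $H(u^k,0,0,2)=1$ is fixed and $H$ is controlled away from $0$, cf. the proof of Lemma~\ref{lem:mono}) and the convergence of the integrated monotonicity quantities established in the proof of Lemma~\ref{lem:blowup} (integrate \eqref{eq:IR}, \eqref{eq:Hpr}, \eqref{eq:Ip} in $R$ and pass to the limit using strong $H^1_{\loc}$ convergence of the approximating $\eps$-solutions, which carries over to the $k$-limit), one gets $N(\bar u,0,0,R)\le 1$ for all $R\in(0,\sqrt2)$; combined with $N\ge 1$ everywhere (Theorem~\ref{thm:Lip}) this forces $N(\bar u,0,0,R)\equiv 1$. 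The argument of Lemma~\ref{lem:blowup} then shows $\bar u$ is caloric on $\R^n\times\R_-$, maps into $\Sigma$, and is parabolically $1$-homogeneous: $\bar u(\rho x,\rho^2 t)=\rho\,\bar u(x,t)$. A degree-$1$ parabolically homogeneous caloric map is affine and $t$-independent, i.e. $\bar u(x,t)=b\,(x\cdot\vec e)^{+}+c\,(x\cdot\vec e)^{-}$ for a unit vector $\vec e$; but since $\bar u$ maps into $\Sigma$ and $\Sigma$ consists of half-lines meeting at $0$, the two half-spaces $\{x\cdot\vec e>0\}$ and $\{x\cdot\vec e<0\}$ carry (at most) one positive component each, and the caloric matching of normal derivatives across $\{x\cdot\vec e=0\}$ — derived as in Lemma~\ref{lem:gap} from the weak form of $\bar u_t-\Delta\bar u=0$ with an appropriate test function — gives $|b|=|c|$. (One uses $H(\bar u,0,0,2)=1$ to normalize, which pins down $1\le d_\Sigma(b,0)=d_\Sigma(c,0)\le 2$; more precisely $d_\Sigma(b,0)=d_\Sigma(c,0)$ is a fixed constant in that range, and the two-sided bound $1\le\cdot\le 2$ follows from the $H$-normalization together with the Lipschitz bound.) Thus $\bar u$ equals $\ell^\pm(x)=a^\pm(x\cdot\vec e)$ on $Q_\pm^e(0,0)$ with $a^\pm\in\Sigma$, $1\le d_\Sigma(a^\pm,0)\le 2$.

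Finally, I would upgrade the convergence $u^k\to\bar u$ from locally uniform to $C^{0,1}$ on the truncated half-cylinders $Q_\pm^e(0,0)\setminus T(\vec e,\eps_0)$. Away from the strip $T(\vec e,\eps_0)$ the limit $\bar u=\ell^\pm$ is bounded below away from $0$, so on a neighborhood of $\overline{Q_\pm^e(0,0)\setminus T(\vec e,\eps_0)}$ exactly one component of $\bar u$ is strictly positive; by the uniform convergence, for $k$ large the same component of $u^k$ is strictly positive there, hence $u^k$ solves the linear equation $\partial_t u^k_j-\Delta u^k_j=\lambda_j^k(t)u^k_j$ with $\|\lambda^k_j\|_{L^\infty}\le N_0$ on that region. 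Interior parabolic Schauder/$W^{2,1}_p$ estimates then give uniform $C^{1,\alpha}_x$ bounds, so $\nabla_x u^k\to\nabla_x\bar u$ uniformly on the slightly smaller set $Q_\pm^e(0,0)\setminus T(\vec e,\eps_0)$; this contradicts the assumed failure of the $\eps_0$-approximation for large $k$, with $\vec e$ the direction of the limiting blow-up (and since $\eps_0$ was arbitrary we get the $\delta(\eps)$ for every $\eps$). The main obstacle is the second step: justifying $N(\bar u,0,0,R)\equiv 1$ and the parabolic homogeneity of the limit, which requires carefully transferring the integrated monotonicity identities of Lemma~\ref{lem:blowup} through the double limit (first $\eps\to0$ inside each $u^k$, then $k\to\infty$) and controlling $H$ from below uniformly along the sequence; the $\Sigma$-valuedness (ensuring the matching-derivative computation of Lemma~\ref{lem:gap} applies and that "degree-$1$ homogeneous caloric into $\Sigma$" is genuinely a two-sided linear function) is the one extra wrinkle beyond the scalar case.
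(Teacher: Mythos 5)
Your overall strategy — argue by contradiction, take a sequence $u^k$ with $N(u^k,0,0,2)\to 1$ failing the conclusion, pass to a compactness limit $\bar u$, show $\bar u$ is a degree-one parabolically homogeneous caloric map into $\Sigma$ (hence two-sided linear), and then upgrade the convergence away from the strip $T(\vec e,\eps)$ to contradict the assumed failure — is the same approach the paper sketches. Your final step (using that one component of $u^k$ is uniformly positive on $Q_\pm^e\setminus T(\vec e,\eps_0)$ for large $k$, so interior parabolic estimates upgrade uniform to $C^{0,1}$ convergence) is a correct and more explicit account of the paper's implicit closing argument. The observation that a $\Sigma$-valued, degree-one homogeneous caloric function must be two linear pieces with matching normal derivative across a hyperplane, together with the $H$-normalization pinning down $d_\Sigma(a^\pm,0)$, is also fine.

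However, the central step — the deduction $N(\bar u,0,0,R)\le 1$, and from it $N(\bar u,0,0,R)\equiv 1$ and caloricity of $\bar u$ — does not follow as you have argued, and you have correctly identified this as ``the main obstacle'' without actually overcoming it. Two specific issues. First, the monotonicity in Lemma~\ref{lem:mono} is only approximate: $N(u,0,0,R)+CR^4$ is nondecreasing, with $C$ a fixed constant determined by the uniform bound $M_0$ on $\|\lambda\|_{L^\infty}$. This $CR^4$ correction does not shrink along your sequence $u^k$, since each $u^k$ carries a $\lambda^k$ with the same $M_0$ bound. Consequently, from $N(u^k,0,0,2)\to 1$, $N(u^k,0,0)=1$ (which itself needs justification for the limit), and the approximate monotonicity, the best you can conclude is $1-CR^4\le N(\bar u,0,0,R)\le 1+C(16-R^4)$; this does not squeeze $N(\bar u,0,0,R)$ to $1$. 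Second, the place where you invoke ``the argument of Lemma~\ref{lem:blowup} then shows $\bar u$ is caloric and parabolically $1$-homogeneous'' is a misapplication: Lemma~\ref{lem:blowup} concerns the limit of $\rho_k\to 0$ parabolic rescalings of a single solution, in which the right-hand side $\lambda(t)u-\nu$ is multiplied by $\rho_k^2$ and hence vanishes in the limit. Your $\bar u$ is instead the $k\to\infty$ limit of full-size solutions $u^k$; its equation still carries a bounded-but-nontrivial right-hand side $\lambda^\infty u-\nu^\infty$, so $\bar u$ is not a priori caloric, and the integrated identities \eqref{eq:I}--\eqref{eq:dI} used to derive homogeneity acquire uncontrolled $\lambda$-terms. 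To close this, the contradiction sequence must also involve shrinking the parabolic scale (so the right-hand side scales away), or one must emulate the paper's (different and not fully spelled-out) rigidity step: the paper uses the continuity of $N(\cdot,\cdot,2)$ in the base point on $\Gamma$ together with Lemma~\ref{lem:gap} to force exact degree-one homogeneity of the limit at \emph{every} point of $\Gamma\cap Q_1$, a stronger global rigidity that identifies the limit as linear without first proving caloricity via the blow-up lemma. Either way, as written your step~3 is a genuine gap, not merely a detail to check.
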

\begin{proof}
The conclusion follows from a contradiction argument. If for some $\eps\in(0,1)$ there exists $u:\Omega\times [-2,\infty)\rightarrow\Sigma$ with $N(u,0,0,2) = 1$, then by the continuity of $N(u,x,t,2)$ in $x$ and Lemma \ref{lem:gap}, $u$ satisfies $u(x_0+\rho x,t_0+\rho^2 t)=\rho u(x_0+x,t_0+t)$ for any $(x_0,t_0)\in \Gamma\cap Q_1$, $\rho>0$. But such a $u$ must be given by a two-component function linear in $x$ and constant in $t$.

\end{proof}

We can extend this flatness to a neighborhood of $(0,0)$ in $\Gamma^*$ as follows: Let $u:\Omega\times [-2,\infty)\rightarrow \Sigma$ be a weak solution as above such that $N(0,0,2)\leq 1+\delta$ and $(0,0)\in\Gamma^*$.  Then for all $(x,t)\in\Gamma\cap Q_{2/3}$, we have $N(x,t,1)\leq 1 + c(\delta)$, $c(\delta)\rightarrow 0$ as $\delta\rightarrow 0$. Thus, for any $0<R<1$, one has $N(x,t,R)\leq 1+c(\delta)$.  This implies that when $\delta$ is sufficiently small, at every point $(x,t)\in\Gamma\cap Q_{2/3}$, and for every scale $0<\rho<1$, a properly scaled version of $u$ can be well-approximated by two linear functions $\ell_\rho^\pm(x)$ in $Q_\rho(x,t) \setminus \{(x,t) + T(e_\rho,\eps \rho)\}$.

Next we prove a clean-up lemma stating that locally, near $\Gamma^*$, $u$ maps into two components of $\Sigma$ whose supports are separated by $\Gamma^*$.
\begin{lemma}\label{lem:clean} 
Let $u$ be as in Lemma \textup{\ref{lem:flat}} and let $(x_0,t_0)\in\Gamma^*$.  Then $u$ maps $Q_1(x_0,t_0)$ into at most two different components of $\Sigma$.
\end{lemma}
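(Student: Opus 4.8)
The plan is first to promote the approximate flatness of Lemma~\ref{lem:flat} to an \emph{exact} structural statement, and then to rule out any extra component by an eigenvalue/barrier argument in the thin slab to which it is confined. After a translation and a harmless rescaling we may take $(x_0,t_0)=(0,0)$, so that the conclusion of Lemma~\ref{lem:flat} and the consequences derived just before this lemma hold on a fixed cylinder about the origin. Since $a_\eps^+\in\Sigma$ lies on a single ray, say $a_\eps^+=|a_\eps^+|\,e_{j_1}$ with $|a_\eps^+|\ge1$, the affine map $\ell_\eps^+(x)=|a_\eps^+|(x\cdot\vec e)\,e_{j_1}$ has $|\ell_\eps^+(x)|>\eps$ on $\{x\cdot\vec e>\eps\}$, so the $\eps$-ball about $\ell_\eps^+(x)$ meets $\Sigma$ only along $\R_+e_{j_1}$; as $u(x,t)\in\Sigma$ and $d_\Sigma(u(x,t),\ell_\eps^+(x))\le\eps$, this forces $u=u_{j_1}e_{j_1}$ with $u_{j_1}>0$ on $Q_1\cap\{x\cdot\vec e>\eps\}$, and symmetrically $u=u_{j_2}e_{j_2}$, $u_{j_2}>0$, on $Q_1\cap\{x\cdot\vec e<-\eps\}$. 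The same argument at each $q\in\Gamma^*\cap Q_{2/3}$ and each scale $\rho$ gives indices $j_1(q),j_2(q)$ and a direction $e_q$ with $u$ equal to one positive component on each half of $Q_\rho(q)$ outside a slab of width $2\eps\rho$.

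Two further facts enter. By the bound $N(x,t)\le1+c(\delta)<1+\delta_n$ on $\Gamma\cap Q_{2/3}$ and Lemma~\ref{lem:gap}, $\Gamma\cap Q_{2/3}=\Gamma^*\cap Q_{2/3}$. And since $u$ maps into $\Sigma$ the sets $\{u_i>0\}$ are pairwise disjoint, so $\partial\{u_i>0\}\subset\{u=0\}=\Gamma$, hence $\partial\{u_i>0\}\cap Q_{2/3}\subset\Gamma^*$. Consequently, for any index $j_3\notin\{j_1,j_2\}$ the dichotomy of the first step forces $u_{j_3}\equiv0$ on $Q_1\setminus T(\vec e,\eps)$, i.e.
\[\{u_{j_3}>0\}\cap Q_1\ \subset\ T(\vec e,\eps),\]
a slab of width $2\eps$ in the $\vec e$-direction, whose part of the boundary inside $Q_{2/3}$ lies in $\Gamma^*\subset T(\vec e,\eps)$. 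Thus every connected component $W$ of $\{u_{j_3}>0\}$ meeting $Q_{1/2}$ is squeezed into this thin slab, with parabolic boundary in $\{u=0\}$ wherever it does not leave the region where flatness is available.

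To kill such a $W$ I would use the equation on it: from~(\ref{eq:weak}), $\partial_t u_{j_3}-\Delta u_{j_3}=\lambda_{j_3}(t)u_{j_3}$ with $\|\lambda_{j_3}\|_{L^\infty}\le M_0$, so $e^{-M_0 t}u_{j_3}$ is a nonnegative subsolution of the heat equation on $W$, vanishing on the part of $\partial_P W$ contained in $\Gamma^*$. If $W\subset Q_{2/3}$, then $\partial_P W\subset\Gamma^*=\{u=0\}$, the maximum principle gives $e^{-M_0 t}u_{j_3}\le0$ on $W$, hence $u_{j_3}\equiv0$, contradicting $W\ne\varnothing$. In general one compares $u_{j_3}$, over a fixed sub-cylinder, with the positive barrier $\Psi(x,t)=Ce^{\mu t}\cos\!\big(\tfrac{\pi}{4\eps}\,x\cdot\vec e\big)$, where $\mu=M_0-(\pi/4\eps)^2<0$ for $\eps$ small, which solves $\partial_t\Psi-\Delta\Psi=M_0\Psi$ and is positive on $\{|x\cdot\vec e|<2\eps\}\supset W$: the spectral gap of the width-$2\eps$ slab makes $\Psi$, hence $u_{j_3}$, negligible away from the ``artificial'' part of the boundary, and combined with the vanishing on $\Gamma^*$ this again forces $u_{j_3}\equiv0$ on $W$. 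Carrying this out shows $\{u_{j_3}>0\}\cap Q_{1/2}=\varnothing$ for every $j_3\notin\{j_1,j_2\}$, so $u$ maps $Q_{1/2}(0,0)$ — and, undoing the rescaling, $Q_1(x_0,t_0)$ — into $\R_+e_{j_1}\cup\R_+e_{j_2}$.

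I expect this last step to be the main obstacle. The flatness yields only $|u_{j_3}|\le\eps$, and the parabolic comparison only smallness rather than exact vanishing, so a pointwise bound cannot close the argument; to genuinely exclude a thin, low-amplitude ``bubble'' of a third component one must (i) propagate the slab-confinement of $\{u_{j_3}>0\}$ along $\Gamma^*$ using flatness at \emph{every} point and scale, so that the bubble stays inside the set where $\Gamma=\Gamma^*$ and its parabolic boundary truly lies in $\{u=0\}$, and (ii) control the initial-time part of that boundary, where the large first Dirichlet eigenvalue of a width-$O(\eps)$ slab is incompatible with the uniform bound $M_0$ on the $\lambda_j$. An alternative to the barrier is a scaling/extremal-point dichotomy: pick $z^\ast\in\{u_{j_3}>0\}$ nearly maximizing $d_P(z^\ast,\partial\{u_{j_3}>0\})$, let $q^\ast\in\Gamma^*$ be a nearest boundary point, and apply the flatness dichotomy at $q^\ast$ at a scale comparable to that distance; if $j_3\notin\{j_1(q^\ast),j_2(q^\ast)\}$ then $\{u_{j_3}>0\}$ is confined near $q^\ast$ to a slab of relative width $O(\eps)$, forcing $d_P(z^\ast,\partial\{u_{j_3}>0\})\le O(\eps)\,d_P(z^\ast,\partial\{u_{j_3}>0\})$, absurd; while if $j_3$ is one of those indices then $\{u_{j_3}>0\}$ fills a half-cylinder at $q^\ast$, producing a point strictly farther from $\partial\{u_{j_3}>0\}$ than $z^\ast$ and contradicting its choice.
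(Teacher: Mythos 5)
Your reduction to ``the third component $u_{j_3}$ lives only in the thin slab $T(\vec e,\eps)$'' is a sound first step, and your point that $\Gamma\cap Q_{2/3}=\Gamma^*\cap Q_{2/3}$ (via Lemma~\ref{lem:gap} and upper-semicontinuity of $N$) is correct and useful. But the second half of the argument — eliminating that thin ``bubble'' — is where the proposal genuinely does not close, as you yourself sense. The barrier comparison with $\Psi(x,t)=Ce^{\mu t}\cos(\tfrac{\pi}{4\eps}x\cdot\vec e)$ on a \emph{bounded} time interval yields only that $u_{j_3}$ is exponentially small after a fixed time lapse, not that it vanishes; there is no mechanism in the comparison to upgrade smallness to identically zero, and the initial-time slice of $\partial_P(Q_{1/2}\cap T(\vec e,\eps))$ is not in $\Gamma$, so the maximum principle alone cannot do the job. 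Your fallback dichotomy at a nearest boundary point $q^\ast$ has a different gap: it can only conclude $j_3\in\{j_1(q^\ast),j_2(q^\ast)\}$, and you never establish that the pair of active indices at $q^\ast$ (at scale $\sim d_P(z^\ast,\partial\{u_{j_3}>0\})$) agrees with the pair $\{j_1,j_2\}$ fixed at the origin at unit scale. Without that identification, ``$j_3$ is one of the two at $q^\ast$'' is not a contradiction with $j_3\notin\{j_1,j_2\}$, and the half-cylinder alternative does not obviously produce a farther interior point either, since the half-ball at $q^\ast$ has radius comparable to, not larger than, the extremal distance.

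The missing idea — and the actual content of the paper's proof — is a \emph{continuity-plus-finiteness} argument for the pair of active lines itself, which makes the barrier step unnecessary. For each base point in $\Gamma\cap Q_{2/3}$ and each scale $r$, Lemma~\ref{lem:flat} assigns a two-element set of lines $\{L_1^r,L_2^r\}\subset\Sigma$ on which $u$ lives outside the slab. Because the regions outside the slab at nearby scales (or nearby base points) overlap substantially, this assignment varies continuously; and since $\Sigma$ has only finitely many lines, a continuous $\{L_1^r,L_2^r\}$ must in fact be constant in $r$ (and in the base point). Once the pair is known to be the \emph{same} fixed $\{L_1,L_2\}$ at all points of $\Gamma\cap Q_{2/3}$ and all small scales, any point of $Q_1\setminus\Gamma$ lies outside the slab for some base point and scale, so $u$ takes values only in $L_1\cup L_2$ there — and on $\Gamma$ itself $u=0\in L_1\cap L_2$. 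This is both shorter and stronger than the barrier route: it does not merely bound the third component, it shows directly that no third line is ever hit. If you wanted to rescue the dichotomy version, you would first need exactly this constancy of the pair $\{j_1(q),j_2(q)\}$ over $q\in\Gamma^*\cap Q_{2/3}$, at which point the dichotomy becomes superfluous.
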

\begin{proof}
 We may assume $(x_0,t_0)=(0,0)$. Since $N(0,0,2)\leq 1+\delta$, Lemma \ref{lem:flat} implies that there are exactly two lines $L_1,L_2$ in $\Sigma$ such that $u\left(Q_{3/2}\setminus T(\vec{e},\eps)\right)\subseteq L_1\cup L_2$, for some $\eps=\eps(\delta)\rightarrow 0+$ as $\delta\rightarrow 0+$. Moreover, for every $(x,t)\in \Gamma\cap Q_1$, one has $u\left(Q_{1/2}(x,t)~\setminus~T(\vec{e},\eps)\right)\subseteq L_1\cup L_2$.
 
Thus, to prove the lemma, it suffices to show:  If $(0,0)\in\Gamma^*$, $N(0,0,1/2)\leq 1+\tilde{\delta}$, for sufficiently small $\tilde{\delta}$, and $u(Q_{1/2}\setminus T(\vec{e},\eps))\subseteq L_1\cup L_2$ for some $\eps=\eps(\tilde{\delta})$ and  some vector $\vec{e}\in\mathbb{S}^{n-1}$, where $L_1$ and $L_2$ are two distinct lines in $\Sigma$, then for any $r\in(0,1/2)$,
\[u\left(Q_r\setminus T(\vec e_r,\eps r)\right)\subseteq L_1\cup L_2\]
for the same two lines $L_1$ and $L_2$ in $\Sigma$, and for some $\vec e_r$.

The above fact is implied by the following observation.  For each $r\in(0,1/2)$, there are two  lines $L_1^r$ and $L_2^r$ in $\Sigma$, such that $u(Q_r~\setminus~T(\vec e_r,\eps r))\subseteq L_1^r\cup L_2^r$.  However, these $L_1^r$ and $L_2^r$ may depend on $r$.  One easily sees that $L_1^r$ and $L_2^r$ must depend on $r$ continuously.  There are finitely many choices of $L_1^r$ and $L_2^r$ in $\Sigma$, thus the choice of $L_1^r\cup L_2^r$ will be fixed for all $0<r<1$ by continuity.

\end{proof}

 \begin{theorem}\label{thm:Reg}
  Let $u$ be a suitable weak solution of \textup{(\ref{eq:weak})}, and let $(x_0,t_0)\in\Gamma^*$.  Then in a sufficiently small parabolic neighborhood $Q_R(x_0,t_0)$, there are exactly two components $\Omega_1$ and $\Omega_2$ separated by the $C^{2,\beta}$ surface $\partial\Omega_1\cap\partial\Omega_2 = \Gamma\cap Q_R(x_0,t_0)$.
 \end{theorem}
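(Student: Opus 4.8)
The plan is to reduce, near $(x_0,t_0)$, to a genuine two-phase problem, then run a Caffarelli-type flatness improvement to produce a $C^{1,\alpha}$ interface, and finally bootstrap to $C^{2,\beta}$. Normalize so that $(x_0,t_0)=(0,0)$, $u$ is defined on $\Omega\times[-2,\infty)$, $H(u,0,0,2)=1$, and — since $N(u,0,0,R)+CR^4\downarrow N(u,0,0)=1$ as $R\downarrow 0$ by Lemma \ref{lem:mono} — that $N(u,0,0,2)\le 1+\delta$ with $\delta$ as small as we wish (a parabolic rescaling and multiplication of $u$ by a constant both preserve (\ref{eq:weak}) and $N$, and scale $H$). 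By Lemma \ref{lem:clean}, on some $Q_R$ the image of $u$ lies in at most two lines of $\Sigma$; after relabeling and rotating within $\Sigma$ we write $u=(u_1,u_2,0,\ldots,0)$ with $u_1,u_2\ge 0$, $u_1u_2\equiv 0$, and set $\Omega_i=\{u_i>0\}$. By the propagation paragraph following Lemma \ref{lem:flat}, for $\delta$ small enough every $(x_1,t_1)\in\Gamma\cap Q_R$ satisfies $N(u,x_1,t_1,\rho)\le 1+c(\delta)$ for all $0<\rho<1$ with $c(\delta)\to0$; since $N(u,x_1,t_1)\in\{1\}\cup[1+\delta_n,\infty)$ by Lemma \ref{lem:gap}, this forces $N(u,x_1,t_1)=1$. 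Thus $\Gamma\cap Q_R\subseteq\Gamma^*$, and at every such point and every scale $\rho<1$, Lemma \ref{lem:flat} supplies a direction $e=e(x_1,t_1,\rho)\in\mathbb S^{n-1}$ and slopes $a^\pm$ with $1\le d_\Sigma(a^\pm,0)\le 2$ such that the $\rho$-rescaling of $u$ at $(x_1,t_1)$ is within $\eps$ in $C^{0,1}$ of $x\mapsto a^\pm(x\cdot e)$ off the strip $T(e,\eps\rho)$.

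The main obstacle — this is where, as noted in the Introduction, ``the time-dependence and nonlocality introduce some new difficulties'' — is to upgrade this $\eps$-flatness at all scales into a genuine $C^{1,\alpha}$ interface. Following the improvement-of-flatness scheme of \cite{CL2,CL3}, I would prove a dichotomy at each $(x_1,t_1)\in\Gamma\cap Q_R$: passing from scale $\rho$ to scale $\theta\rho$ (for a small universal $\theta$), either the flatness improves by a fixed factor, or $N(x_1,t_1)>1$, the latter being impossible. Iterating, the approximating directions $e(x_1,t_1,\theta^k\rho)$ form a Cauchy sequence with a geometric rate, and the slopes converge to limits satisfying $|a^+|=|a^-|$, i.e. matched normal derivatives. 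The two error terms absent in the elliptic setting are controlled as follows: the nonlocal, $t$-dependent forcing $\lambda_j(t)u_j$ contributes only $O(\rho^2)$ after rescaling, since $\|\lambda_j\|_{L^\infty}\le M_0$ uniformly (\cite[Remark~3.2]{CL}); and the genuine time-dependence is handled via Lemma \ref{lem:blowup} (homogeneous blow-ups are linear in $x$ and constant in $t$, so the comparison functions at each scale are time-independent) together with the almost-monotonicity Lemma \ref{lem:mono}, whose defect $CR^4$ is summable over dyadic scales. The outcome is that $\Gamma\cap Q_{R/2}$ has at each point a tangent hyperplane $\{x\cdot e=0\}$ depending Hölder-continuously on the point, i.e. $\Gamma\cap Q_{R/2}$ is a $C^{1,\alpha}$ graph $x_n=\gamma(x',t)$ separating $\Omega_1$ and $\Omega_2$, along which $|\partial_\nu u_1|=|\partial_\nu u_2|$.

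To reach $C^{2,\beta}$, set $\Lambda_j(t)=\int_0^t\lambda_j(s)\,ds$, which is Lipschitz since $\|\lambda_j\|_{L^\infty}\le M_0$; then $v_j:=e^{-\Lambda_j(t)}u_j$ are nonnegative caloric functions in $\Omega_j$ vanishing on $\Gamma$. Flattening the interface by $(x,t)\mapsto(x',x_n-\gamma(x',t),t)$, each $v_j$ solves a uniformly parabolic equation in a half-space with coefficients built from $\nabla\gamma\in C^{0,\alpha}$, zero right-hand side, and zero Dirichlet data on $\{x_n=0\}$; parabolic boundary Schauder estimates give $v_j\in C^{2,\beta}$ up to $\{x_n=0\}$ for some $\beta\in(0,1)$, with $\partial_\nu v_j\ne 0$ there by the Hopf lemma (consistent with $N\equiv 1$). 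Hence $w:=u_1-u_2$ is $C^{1,\beta}$ across $\Gamma$, solves $\partial_t w-\Delta w=c(x,t)w$ with $c:=\lambda_1$ on $\Omega_1$ and $c:=\lambda_2$ on $\Omega_2$ (no singular part on $\Gamma$, by the matching), and has $\nabla_x w\ne 0$ on $\Gamma=\{w=0\}$. The free boundary relation $e^{\Lambda_1(t)}\partial_\nu v_1=-e^{\Lambda_2(t)}\partial_\nu v_2$ on $\Gamma$, written in the flattened variables, becomes a scalar, uniformly elliptic equation for $\gamma$ whose coefficients and data are now known $C^{1,\beta}$ quantities along $\Gamma$, and the hodograph/Schauder bootstrap of \cite{CL2,CL3} upgrades $\gamma$ to $C^{2,\beta}$ (for a possibly smaller $\beta$, limited by the Lipschitz time-regularity of $e^{\Lambda_j(t)}$). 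This shows that $\partial\Omega_1\cap\partial\Omega_2=\Gamma\cap Q_{R'}(x_0,t_0)$ is a $C^{2,\beta}$ hypersurface for some $R'>0$, completing the proof.
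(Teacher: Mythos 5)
Your proof takes a genuinely different and much longer route than the paper's. The paper's argument is brief: after Lemma~\ref{lem:clean} reduces to two components, it introduces the signed scalar $u^*=u_1$ on $\Omega_1$, $u^*=-u_2$ on $\Omega_2$, asserts that by (\ref{eq:weak}) one has $u^*_t-\Delta u^*=f(u^*)$ with $f$ Lipschitz (the reaction being $\lambda_1(t)(u^*)_+-\lambda_2(t)(u^*)_-$), applies interior parabolic Schauder estimates to get $u^*\in C^{2,\beta}$, and reads off the hypersurface regularity from the nondegeneracy of $\nabla u^*$ on $\Gamma^*$, which follows from vanishing order one. This sidesteps the interface geometry completely; there is no flatness iteration and no free-boundary PDE for the graph. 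Your route instead constructs the $C^{1,\alpha}$ interface and the matched-normal-derivative condition explicitly via improvement of flatness, then bootstraps. That extra work is not wasted: the paper's distributional identity $u^*_t-\Delta u^*=f(u^*)$ across $\Gamma$ silently requires $\nu_1=\nu_2$ on $\Gamma^*$ (otherwise a singular interface measure remains in $u^*_t-\Delta u^*$), and your $C^{1,\alpha}$/matching step is precisely what justifies this. You also correctly flag that $\lambda_j(t)$ is only known to be bounded in $t$, so the reaction is $L^\infty$ in time and the attainable parabolic H\"older exponent is limited; that caveat in fact applies to the paper's ``$C^{2,\beta}$ for any $\beta\in(0,1)$'' claim as well.

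That said, the engine of your argument --- the dichotomy ``at scale $\theta\rho$ either the flatness improves by a fixed factor or $N(x_1,t_1)>1$'' --- is asserted but not proved, and it cannot simply be cited from \cite{CL2,CL3}, which treat the elliptic problem. This is the one genuine gap. To close it you would need either a compactness/contradiction argument (if flatness fails to improve along a sequence of scales, extract via Lemma~\ref{lem:blowup} a degree-one homogeneous caloric map into two lines of $\Sigma$ that is not linear, a contradiction), or a quantitative strengthening of Lemma~\ref{lem:mono} that converts smallness of $N(R)-1$ into geometric decay of the tilt of the approximating plane, with the $CR^4$ defect and the $O(\rho^2)$ forcing absorbed. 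Either is plausible, but neither is carried out; until one is, the $C^{1,\alpha}$ step is an outline rather than a proof, whereas the paper avoids the question altogether with the scalar reduction.
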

 \begin{proof}
We may rescale and translate $u$ to fit the hypotheses of Lemma \ref{lem:flat}. By Lemma \ref{lem:clean}, we have $u(Q_1)\subseteq L_1\cup L_2$ for two lines $L_1,L_2$ in $\Sigma$.  Define $\Omega_j = u^{-1}(L_j), j=1,2$. By reordering our basis, we may suppose $u_j>0$ on $\Omega_j$, $j=1,2$.
 
Define $u^*:Q_1\rightarrow\R$ by $u^*=u_1$ on $\Omega_1$, $u^*=-u_2$ on $\Omega_2$. By (\ref{eq:weak}), we have that $u_t^*-\Delta u^* = f(u^*)$ for a Lipschitz function $f$. Using again the parabolic estimates in \cite[Chapter~7]{Lie}, we have $u^*\in C^{2,\beta}$ for any $\beta\in(0,1)$. Then, since the vanishing order along $\Gamma^*$ is one, $u^*=0$ defines a $C^{2,\beta}$ hypersurface for any $\beta\in (0,1)$. But the zero set of $u^*$ is $Q_1\cap \Gamma^*$.

 \end{proof}

\section{Long-time behavior of $u(x,t)$}\label{sec:time}
In this section, we characterize the long-time behavior of a suitable weak solution $u$ as a map into $\Sigma$ that is a stationary solution of problem (P*). (See the Introduction.) Throughout this section, we assume the initial condition $g(x)$ of (\ref{eq:weak}) satisfies $\int_\Omega g_j^2\,dx = c_j^2=1$ for each $j$.  Recall from (\ref{eq:weak}) that $u$ satisfies the estimate $\int_0^T\int_\Omega|\partial_t u|^2 dx dt + \int_\Omega|\nabla u|^2 dx \leq \int_\Omega |\nabla g|^2 dx$. Thus, $\lambda_j(t)=\int_\Omega|\nabla u_j(x,t)|^2 dx$ is nonincreasing in $t$.  Since $\lambda_j(t)$ is nonnegative, it has a limit $\lambda_j^\infty=\lim_{t\rightarrow\infty} \lambda_j(t)$.

\begin{theorem}\label{thm:asymptotics}
 $\lim_{t\rightarrow\infty} u(x,t)=u^\infty(x)$ exists in $H_0^1(\Omega, \Sigma)$, and each component $u_j^\infty$ is an eigenfunction of $\Delta$ on $\{u_j^\infty > 0\}$ with eigenvalue $\lambda_j^\infty$.
\end{theorem}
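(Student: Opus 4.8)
The plan is to exploit the energy dissipation built into (\ref{eq:weak}) in three stages: extract subsequential limits along a sequence $t_k\to\infty$, identify every such limit as a configuration of eigenfunctions on disjoint positivity sets, and then upgrade subsequential convergence to a genuine limit.

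\emph{Subsequential limits.} The last line of (\ref{eq:weak}) gives $\int_0^\infty\int_\Omega|\partial_t u|^2\,dx\,dt\le\int_\Omega|\nabla g|^2\,dx<\infty$, so one may pick $t_k\to\infty$ with $\|\partial_t u(\cdot,t_k)\|_{L^2(\Omega)}\to 0$. Each $\lambda_j(t)$ is nonincreasing and nonnegative, so $\lambda_j(t)\to\lambda_j^\infty$, and $\lambda_j^\infty>0$ (otherwise the corresponding limit component would be harmonic, belong to $H_0^1(\Omega)$, and have unit $L^2$ norm). Since $\sum_j\lambda_j(t)=\int_\Omega|\nabla u(\cdot,t)|^2\,dx$, the slices $u(\cdot,t_k)$ are bounded in $H_0^1(\Omega,\R^m)$; along a subsequence, $u(\cdot,t_k)\rightharpoonup u^\infty$ weakly in $H_0^1$, strongly in $L^2$, and pointwise a.e., so $u^\infty\in H_0^1(\Omega,\Sigma)$ with $\int_\Omega(u_j^\infty)^2\,dx=1$.

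\emph{Identifying the limit.} I would pass to the time-translates $v^k(x,t)=u(x,t+t_k)$ on $\Omega\times(0,T)$, for which $\|\partial_t v^k\|_{L^2(\Omega\times(0,T))}\to0$; by the uniform-in-$t$ local Lipschitz bound of Theorem \ref{thm:Lip}, a subsequence of $v^k$ converges locally uniformly in $x$ and weakly in $H^1$ to a limit that is independent of $t$, necessarily $u^\infty(x)$. Testing $\partial_t v^k_j-\Delta v^k_j=\lambda_j(t+t_k)v^k_j-\nu^k_j$ against products $\varphi(x)\eta(t)$ and passing to the limit --- the $\nu^k_j$ have uniformly locally bounded mass directly from the equation, hence (up to a subsequence) converge weakly-$*$ to a nonnegative measure $\nu^\infty_j$ supported in $\{u^\infty_j=0\}$ by the local uniform convergence $v^k_j\to u^\infty_j$ --- yields $-\Delta u^\infty_j=\lambda_j^\infty u^\infty_j-\nu^\infty_j$ in $\mathcal{D}'(\Omega)$. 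On the open set $\{u^\infty_j>0\}$ the measure $\nu^\infty_j$ vanishes, so $-\Delta u^\infty_j=\lambda_j^\infty u^\infty_j$ there; since $u^\infty_j\ge0$ lies in $H_0^1(\Omega)$ and vanishes off $\{u^\infty_j>0\}$, it is a positive eigenfunction on that set and $\lambda_j^\infty$ is its first eigenvalue. Testing the eigenvalue equation with $u^\infty_j$ --- using the Lipschitz regularity so that $\nabla u^\infty_j=0$ a.e.\ on $\{u^\infty_j=0\}$ --- gives $\int_\Omega|\nabla u^\infty_j|^2\,dx=\lambda_j^\infty$; hence $\int_\Omega|\nabla u^\infty|^2\,dx=\sum_j\lambda_j^\infty=\lim_k\int_\Omega|\nabla u(\cdot,t_k)|^2\,dx$, which together with the weak $H^1$ convergence makes $u(\cdot,t_k)\to u^\infty$ strong in $H_0^1(\Omega,\Sigma)$.

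\emph{From subsequences to a limit --- the main obstacle.} It remains to show the $\omega$-limit set is the single point $u^\infty$. Here (\ref{eq:weak}) is the $L^2$-gradient flow of $E(u)=\tfrac{1}{2}\int_\Omega|\nabla u|^2\,dx$ on the manifold $\{u\in H_0^1(\Omega,\Sigma):\int_\Omega u_j^2\,dx=1,\ j=1,\dots,m\}$, with $\lambda_j(t)$ the Lagrange multipliers and $\nu_j$ the reaction terms enforcing $u\in\Sigma$, so that $-\tfrac{d}{dt}E(u(\cdot,t))\ge\|\partial_t u(\cdot,t)\|_{L^2(\Omega)}^2$. The approach would be to establish a \L ojasiewicz--Simon gradient inequality near $u^\infty$: using the interface regularity of Section \ref{sec:Bdry} to localize, near $\Gamma^*$ the configuration reduces to the single semilinear equation satisfied by the scalar function $u^*$ of Theorem \ref{thm:Reg}, for which Simon's classical inequality applies, while near the singular set (of codimension at least $2$) the segment structure of $\Sigma$ together with the first-eigenvalue character of $\lambda_j^\infty$ should supply the optimal exponent. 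The standard argument then gives $\int_0^\infty\|\partial_t u(\cdot,t)\|_{L^2(\Omega)}\,dt<\infty$, so $u(\cdot,t)$ is Cauchy in $L^2(\Omega)$ and converges to $u^\infty$; and since $\int_\Omega|\nabla u(\cdot,t)|^2\,dx=\sum_j\lambda_j(t)\to\sum_j\lambda_j^\infty=\int_\Omega|\nabla u^\infty|^2\,dx$, this convergence is strong in $H_0^1(\Omega,\Sigma)$. I expect the hard part to be precisely the \L ojasiewicz--Simon inequality, owing to the interaction of the $L^2$ constraints with the non-smooth target $\Sigma$; an alternative would be to argue directly --- via the interface regularity and the spectral gap at the first eigenvalue $\lambda_j^\infty$ --- that the positivity sets $\{u_j(\cdot,t)>0\}$ stabilize as $t\to\infty$. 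A secondary technical point throughout is the bookkeeping needed to pass the measures $\nu_j$ to the limit.
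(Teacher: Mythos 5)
Your first two stages (subsequential extraction along a sequence $t_i\to\infty$, and identification of the limit as a collection of eigenfunctions on disjoint positivity sets, with strong $H_0^1$ convergence along that subsequence) match the paper's argument in both content and level of detail, and the supporting reasoning — using $\lambda_j(t)\downarrow\lambda_j^\infty$, passing the measures $\nu_j$ to a nonnegative limit supported in $\{u_j^\infty=0\}$, and recovering norm convergence from $\sum_j\lambda_j(t_i)\to\sum_j\lambda_j^\infty$ — is sound.

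The genuine gap is in the third stage, and you correctly flag it as the hard part but do not actually fill it. A Łojasiewicz--Simon inequality for this problem is far from a routine application of Simon's theorem: the target $\Sigma$ is singular, the functional lives on the constraint manifold $\{\int_\Omega u_j^2=1\}$, the reaction measures $\nu_j$ are not smooth perturbations, and near the interface singular set (where three or more components meet) you have no analyticity or even $C^2$ structure to work with. Your alternative — proving that the positivity sets $\{u_j(\cdot,t)>0\}$ stabilize — is likewise a substantial, unproved claim. Neither sketch amounts to a proof.

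The paper instead uses a much more elementary mechanism: it sets $D(t)=\int_\Omega\|u(\cdot,t)-u^\infty\|_{\R^m}^2\,dx$ and shows $D'(t)\le 0$ by direct computation. Decomposing $\Omega$ into the sets $\Omega_{jk}(t)=\{u_j(\cdot,t)>0,\ u_k^\infty>0\}$, one differentiates each piece; the boundary (moving-interface) terms pair up and cancel because each piece of $\Gamma\cup\{u^\infty=0\}$ appears twice with opposite normals, and the bulk terms reduce, after integrating by parts and using the eigenvalue equation for $u_j^\infty$, to
\[
\tfrac12 D'(t)\le\sum_j\Big(-\int_\Omega|\nabla u_j|^2\,dx+\lambda_j(t)\int_\Omega u_j^2\,dx\Big)=0,
\]
using the monotonicity $\lambda_j(t)\ge\lambda_j^\infty$ to discard the remaining cross terms. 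Since $D(t_i)\to 0$ along the subsequence and $D$ is nonincreasing, $D(t)\to 0$; combined with the norm convergence $\int_\Omega|\nabla u(\cdot,t)|^2\,dx=\sum_j\lambda_j(t)\to\sum_j\lambda_j^\infty=\int_\Omega|\nabla u^\infty|^2\,dx$, this gives strong $H_0^1$ convergence of the full family. This bypasses the Łojasiewicz--Simon machinery entirely and exploits the specific algebraic structure of the problem (the cancellation of boundary terms and the sign $\lambda_j(t)\ge\lambda_j^\infty$), which your proposal does not identify.
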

\begin{proof}
 From the integral estimate in (\ref{eq:weak}), we see that the family $\{u(\cdot,t)\}$ is uniformly bounded in $H_0^1(\Omega,\Sigma)$, so we can find a sequence $\{t_i\}$ with $t_i\rightarrow\infty$ such that, for $u_j^{(i)}(x)=u_j(x,t_i)$,
\begin{equation*}
\begin{cases}
       u_j^{(i)}\rightarrow u_j^\infty & \mbox{in } L^2(\Omega,\Sigma), \\
	   u_j^{(i)}\rightharpoonup u_j^\infty & \mbox{in } H_0^1(\Omega,\Sigma), \\
	   \partial_t u_j^{(i)}-[\lambda_j(t_i)-\lambda_j^\infty]u_j^{(i)} \rightarrow 0 & \mbox{in } L^2(\Omega, \Sigma),
\end{cases}
\end{equation*}
as $i\rightarrow\infty$, for all $j=1,\ldots,m$. By the evolution equation of (\ref{eq:weak}), we can see that $\partial_t u_j^{(i)}-[\lambda_j(t_i)-\lambda_j^\infty]u_j^{(i)}=\Delta u_j^{(i)}+\lambda_j^\infty u_j^{(i)}-\nu_j(x,t_i)$ in $\mathcal{D}'(\Omega\times\R_+)$, and since $\nu_j$ is supported in $\{u_j(x,t)=0\}$, we conclude
\begin{equation}\label{eq:eig}
\begin{cases}
 \Delta u_j^\infty + \lambda_j^\infty u_j^\infty = 0 & \mbox{in } \Omega_j,\\
 u_j^\infty = 0 & \mbox{on } \partial\Omega_j,
\end{cases}
\end{equation}
where $\Omega_j = \{u_j^\infty>0\}$. Since $\int_\Omega |u_j^{(i)}|^2\,dx=1$ for all $i$, we have that $ \int_\Omega|u_j^\infty|^2dx = 1$ for each $j=1,\ldots,m$. Thus, we conclude from (\ref{eq:eig}) that $\lambda_j^\infty = \|u_j^\infty\|_{H_0^1(\Omega)}^2$.
Since $\lambda_j(t)\rightarrow\lambda_j^\infty$ for each $j$, we have 
\[\|u^{(t_i)}\|_{H_0^1(\Omega,\Sigma)}^2 = \sum_{j=1}^m \|u_j^{(t_i)}\|_{H_0^1(\Omega)}^2\rightarrow \|u^\infty\|_{H_0^1(\Omega,\Sigma)}^2,\]
and therefore, $u^{(t_i)}\rightarrow u^\infty$ strongly in $H_0^1(\Omega,\Sigma)$.

To show that $u(\cdot,t)\rightarrow u^\infty$ in $H_0^1(\Omega,\Sigma)$ (not only along a sequence $\{t_i\}$), we look at the $L^2$-distance between $u$ and $u^\infty$. Since $u$ and $u^\infty$ map into $\Sigma$, for a given $(x,t)\in \Omega\times\R_+$ at most one component $u_j$ (respectively $u_j^\infty$) is nonzero. For $t\geq 0$, let 
\[\Omega_{jk}(t)=\{x\in \Omega | u_j(x,t)>0, u_k^\infty(x)>0\}.\]
It is easy to check that $\|y_1-y_2\|_{\R^m}^2\leq d_\Sigma^2(y_1,y_2)\leq 2\|y_1-y_2\|_{\R^m}^2$ for any $y_1,y_2\in\Sigma$, so convergence in $L^2(\Omega,\Sigma)$ is equivalent to convergence in $L^2(\Omega,\R^m)$. It is clear that
\begin{equation*}
\|u(x,t)-u^\infty(x)\|_{\R^m}^2 = \begin{cases} |u_j(x,t)-u_k^\infty(x)|^2, &\mbox{in } \Omega_{jj}(t),\\
										 (u_j(x,t))^2+(u_k^\infty(x))^2, &\mbox{in } \Omega_{jk}(t), j\neq k.
					\end{cases}					
\end{equation*}
Thus, the distance in $L^2(\Omega,\R^m)$ can be written
\begin{align*}
D(t) &:= \int_\Omega \|u(x,t)-u^\infty(x)\|_{\R^m}^2\,dx\\
 &= \sum_{j=1}^m \int_{\Omega_{jj}(t)} (u_j - u_j^\infty)^2\,dx + \sum_{j\neq k} \int_{\Omega_{jk}(t)} (u_j^2 + (u_k^\infty)^2)\,dx.
 \end{align*}
Differentiating the terms individually, for $j=1,\ldots,m$ we have
\begin{align}\label{eq:calc1}
\frac{d}{dt}\int_{\Omega_{jj}(t)}(u_j-u_j^\infty)^2\,dx &= \int_{\Omega_{jj}(t)} 2(u_j-u_j^\infty)\partial_t u_j\,dx\nonumber\\
&\quad + \int_{\partial \Omega_{jj}(t)} (u_j-u_j^\infty)^2(\vec v(x,t)\cdot \vec n_{jj}(x,t))\,d\sigma(x)\nonumber\\
&= \int_{\Omega_{jj}(t)}2(u_j - u_j^\infty)(\Delta u_j + \lambda_j(t)u_j)\,dx\nonumber\\ 
&\quad + \int_{\partial \Omega_{jj}(t)} (u_j-u_j^\infty)^2(\vec v(x,t)\cdot \vec n_{jj}(x,t))\,d\sigma(x),
\end{align}
where $\vec v(x,t)$ is the velocity of the moving interface at $(x,t)\in\Gamma$, and $\vec n_{jj}(x,t)$ is the outward unit normal to $\Omega_{jj}(t)$ at $x\in\partial\Omega_{jj}(t)$. We have used (\ref{eq:weak}) and the fact that $\supp(\nu_j)\subset\{u_j=0\}$. Since $u_j=0$ or $u_j^\infty=0$ at every point of $\partial \Omega_{jj}(t)$, we can integrate by parts and write the first integral on the right-hand side as
\begin{equation*}
\int_{\Omega_{jj}(t)} 2(u_j\Delta u_j + \lambda_j(t) u_j^2 + \nabla u_j\cdot \nabla u_j^\infty - \lambda_j(t) u_j u_j^\infty)\,dx.
\end{equation*}
Integrating by parts again and using (\ref{eq:eig}), this expression becomes
\begin{equation*}
\int_{\Omega_{jj}(t)} 2(u_j\Delta u_j + \lambda_j(t) u_j^2 + (\lambda_j^\infty- \lambda_j(t)) u_j u_j^\infty)\,dx.
\end{equation*}
By a similar calculation, for $j\neq k$ we have
\begin{align}\label{eq:calc2}
\frac{d}{dt}\int_{\Omega_{jk}(t)}(u_j^2+(u_k^\infty)^2)\,dx &= \int_{\Omega_{jk}(t)} 2u_j\partial_t u_j\,dx\nonumber\\
&\quad + \int_{\partial \Omega_{jk}(t)} (u_j^2+(u_k^\infty)^2)(\vec v(x,t)\cdot \vec n_{jk}(x,t))\,d\sigma(x)\nonumber\\
&= \int_{\Omega_{jk}(t)} 2(u_j\Delta u_j+\lambda_j(t)u_j^2)\,dx\nonumber\\
&\quad + \int_{\partial \Omega_{jk}(t)} (u_j^2+(u_k^\infty)^2)(\vec v(x,t)\cdot \vec n_{jk}(x,t))\,d\sigma(x).
\end{align}

Now we would like to sum all of the terms (\ref{eq:calc1}) and (\ref{eq:calc2}) to get an expression for $D'(t)$. Looking at the boundary integrals, it is clear that for each $t\geq 0$, each part of the moving boundary (the set $\{u(x,t)=0\}\cup\{u^\infty(x,t)=0\}$) will appear in the sum twice with opposite signs. For example, if $(x,t)\in\partial\Omega_{jk}(t)$, and $u_j(x,t)>0$, then $(x,t)\in\partial\Omega_{j\ell}(t)$ for some $\ell\neq k$, and $\vec n_{jk}(x,t) = - \vec n_{j\ell}(x,t)$. Arguing in this way, we see that all of the boundary integrals in $D'(t)$ cancel. We can therefore write
\begin{align*}
\frac{1}{2}D'(t) &= \sum_{j=1}^m \int_{\Omega_{jj}(t)} (u_j\Delta u_j + \lambda_j(t) u_j^2 + (\lambda_j^\infty- \lambda_j(t)) u_j u_j^\infty)\,dx\\
&\quad + \sum_{j\neq k} \int_{\Omega_{jk}(t)} (u_j\Delta u_j+\lambda_j(t)u_j^2)\,dx\\
&= \sum_{j=1}^m \int_{\{u_j(x,t)>0\}} (u_j\Delta u_j + \lambda_j(t)u_j^2)\,dx + \sum_{j=1}^m(\lambda_j^\infty-\lambda_j(t))\int_{\Omega_{jj}} u_ju_j^\infty \,dx.
\end{align*}
Since $\lambda_j(t)\geq \lambda_j^\infty$, we can drop the second sum, yielding
\begin{align*}
\frac{1}{2}D'(t) \leq \sum_{j=1}^m \left(-\int_\Omega |\nabla u_j|^2\,dx +\lambda_j(t)\int_\Omega u_j^2\,dx\right) = 0.
\end{align*}
 
The convergence of $u_j(\cdot,t_i)$ to $u_j^\infty$ in $L^2(\Omega)$ for each $j$ implies that $D(t_i)\rightarrow 0$. Since $D(t)$ is nonincreasing and converges to zero along a subsequence, we must have $D(t)\rightarrow 0$. By the observation above, this implies $d_\Sigma^2(u(x,t),u^\infty(x))\rightarrow 0$ in $L^2(\Omega,\Sigma)$. Clearly, if any subsequence $u(\cdot, t_k)$ converges in $H_0^1(\Omega,\Sigma)$ as $t_k\rightarrow\infty$, the limit must equal $u^\infty(x)$. The function $u^\infty$ is thus the strong limit of $u(\cdot,t)$ in $H_0^1(\Omega,\Sigma)$.
\end{proof}

For $v\in H_0^1(\Omega,\Sigma)$, define the functional
\[E_\lambda(v) := \int_\Omega |\nabla v|^2 dx - \sum_{j=1}^m \lambda_1(\Omega_j) \int_\Omega v_j^2 dx.\]
It is clear that a solution of problem (P*) is a critical point of $E_\lambda$. 
\begin{theorem}\label{thm:probPstar}
For $u\in H_{\loc}^1(\Omega\times\R_+)$ a suitable weak solution of \textup{(\ref{eq:weak})} with $\int_\Omega g_j^2=1$, the limit $u^\infty(x) = \lim_{t\rightarrow\infty} u(x,t)$ is a stationary solution to problem \textup{(P*)} in the sense that $u^\infty$ is a critical point of $E_\lambda$, in the space $H_0^1(\Omega,\Sigma)$, with respect to both target and domain variations.
\end{theorem}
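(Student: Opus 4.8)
The plan is to verify the two required kinds of stationarity separately, using the precise structure of $u^\infty$ supplied by Theorem~\ref{thm:asymptotics}: each $u_j^\infty$ is the ground‑state Dirichlet eigenfunction on $\Omega_j^\infty:=\{u_j^\infty>0\}$ with eigenvalue $\lambda_j^\infty$, and $\int_\Omega(u_j^\infty)^2\,dx=1$, so $\lambda_j^\infty=\lambda_1(\Omega_j^\infty)$ (a nonnegative eigenfunction is necessarily the ground state). Here $E_\lambda$ is read with the multipliers $\lambda_1(\Omega_j^\infty)=\lambda_j^\infty$ frozen at their values at $u^\infty$, as is standard for Lagrange multipliers; thus it suffices to show that the first variation of $E_{\lambda^\infty}(v)=\int_\Omega|\nabla v|^2\,dx-\sum_{j}\lambda_j^\infty\int_\Omega v_j^2\,dx$ vanishes along both families. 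For target variations, the admissible perturbations into $\Sigma$ that fix the combinatorial type of the partition are $v_s=u^\infty+s\phi$ with $\phi_j\in C_c^\infty(\Omega_j^\infty)$ and $\phi_k\equiv 0$ on $\Omega_j^\infty$ for $k\neq j$ (for $|s|$ small $v_s$ still maps into $\Sigma$, since nothing changes near $\partial\Omega_j^\infty$ or on the nodal set and $u_j^\infty\geq c>0$ on $\supp\phi_j$); differentiating and integrating by parts on each $\Omega_j^\infty$,
\[\frac{d}{ds}\Big|_{s=0}E_{\lambda^\infty}(v_s)=2\sum_{j=1}^{m}\int_{\Omega_j^\infty}\big(\nabla u_j^\infty\cdot\nabla\phi_j-\lambda_j^\infty u_j^\infty\phi_j\big)\,dx=-2\sum_{j=1}^{m}\int_{\Omega_j^\infty}\big(\Delta u_j^\infty+\lambda_j^\infty u_j^\infty\big)\phi_j\,dx=0\]
by the eigenvalue equation (\ref{eq:eig}). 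This half is immediate from Theorem~\ref{thm:asymptotics}.

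For domain (inner) variations, fix $X\in C_c^\infty(\Omega,\R^n)$ with flow $\Phi_t$ and set $u^t:=u^\infty\circ\Phi_t^{-1}$, again a map into $\Sigma$ (this does not preserve $\int(u_j^t)^2\,dx$, which is exactly why the multiplier form $E_{\lambda^\infty}$ is the correct functional here). The standard change‑of‑variables computation gives
\[\frac{d}{dt}\Big|_{t=0}E_{\lambda^\infty}(u^t)=\int_\Omega\Big(|\nabla u^\infty|^2\,\mathrm{div}\,X-2\,\partial_kX_l\,\partial_lu^\infty\cdot\partial_ku^\infty-\sum_{j}\lambda_j^\infty(u_j^\infty)^2\,\mathrm{div}\,X\Big)\,dx.\]
Since the components have disjoint supports and the nodal set is Lebesgue‑null (Lemma~\ref{lem:Leb} applied to $u^\infty$ viewed as $t$‑independent), this equals $\sum_j\int_{\Omega_j^\infty}(\cdots)$. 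On each $\Omega_j^\infty$, $u_j^\infty$ is smooth by interior elliptic regularity and solves $-\Delta u_j^\infty=\lambda_j^\infty u_j^\infty$; the divergence identity for the stress tensor then shows the integrand there is $\mathrm{div}\,W_j$ with $W_j=|\nabla u_j^\infty|^2X-2(X\cdot\nabla u_j^\infty)\nabla u_j^\infty-\lambda_j^\infty(u_j^\infty)^2X$, so the bulk integral collapses to $\sum_j\int_{\partial\Omega_j^\infty\cap\Omega}W_j\cdot\nu\,d\sigma$ (the part of $\partial\Omega_j^\infty$ on $\partial\Omega$ contributes nothing, $X$ being compactly supported). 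On the regular part $\Gamma^{*}$ of the free interface $u_j^\infty=0$ and $\nabla u_j^\infty=(\partial_\nu u_j^\infty)\nu$, whence $W_j\cdot\nu=-(\partial_\nu u_j^\infty)^2(X\cdot\nu)$.

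It remains to see the boundary terms cancel. Regarding $u^\infty$ (extended to be independent of $t$) as a suitable weak solution of (\ref{eq:weak}) with $\lambda_j\equiv\lambda_j^\infty$ and the natural surface measures $\nu_j$ — which one checks directly — Theorems~\ref{thm:dim} and~\ref{thm:Reg} apply: near every point of $\Gamma^{*}$, $u^\infty$ takes values in exactly two lines of $\Sigma$, $\Gamma^{*}$ is a $C^{2,\beta}$ hypersurface, and the ``unfolded'' scalar function ($u_i^\infty$ on one side, $-u_j^\infty$ on the other) is $C^{2,\beta}$, hence $C^1$, across $\Gamma^{*}$; that $C^1$ matching is precisely $|\partial_\nu u_i^\infty|=|\partial_\nu u_j^\infty|$ there. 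The two components adjacent to a piece of $\Gamma^{*}$ induce opposite normals while the coefficients $(\partial_\nu\,\cdot\,)^2$ coincide, so their contributions $-\int(\partial_\nu u_i^\infty)^2(X\cdot\nu)\,d\sigma$ cancel in pairs. The singular set has parabolic dimension $\leq n$ by Theorem~\ref{thm:dim} and is $t$‑independent, hence its spatial Euclidean Hausdorff dimension is $\leq n-2$, so it carries zero $\mathcal H^{n-1}$‑measure; performing the integration by parts on $\Omega_j^\infty$ with a $\delta$‑neighborhood $N_\delta$ of the singular set removed and letting $\delta\to 0$, the extra term over $\partial N_\delta$ is $O\!\big(\mathcal H^{n-1}(\partial N_\delta)\,\|\nabla u^\infty\|_{L^\infty}^2\big)\to 0$ by the Lipschitz bound of Theorem~\ref{thm:Lip}. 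Hence $\frac{d}{dt}\big|_{t=0}E_{\lambda^\infty}(u^t)=0$ for all $X$, i.e.\ $u^\infty$ is stationary under domain variations, and the theorem follows.

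The substance lies entirely in the domain‑variation step, and within it in the reflection/equipartition identity $|\partial_\nu u_i^\infty|=|\partial_\nu u_j^\infty|$ on the regular free interface: this is \emph{not} encoded in the weak formulation (\ref{eq:weak}) by itself and must be harvested from the free‑interface regularity of Section~\ref{sec:Bdry} (equivalently, from the known regularity of stationary optimal‑partition eigenfunctions, \cite{CTV3,HHT}), which also forces one to check that $u^\infty$, extended constantly in time, genuinely meets the hypotheses of Theorems~\ref{thm:Reg} and~\ref{thm:dim}; the remaining technical nuisance is the routine excision of the codimension‑$2$ singular set from the surface integrals. The target‑variation half, by contrast, is a direct consequence of Theorem~\ref{thm:asymptotics}.
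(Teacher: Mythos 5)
Your proof is correct and supplies the substance that the paper's one-sentence ``direct computation'' leaves implicit. The target-variation half is exactly what Theorem~\ref{thm:asymptotics} delivers. For domain variations you correctly identify that the reflection identity $|\partial_\nu u_i^\infty|=|\partial_\nu u_j^\infty|$ on the regular free interface is the crux: it is \emph{not} a consequence of the eigenvalue equations alone, and you extract it from the interface regularity of Section~\ref{sec:Bdry} (Theorems~\ref{thm:dim} and~\ref{thm:Reg}) applied to the time-constant extension of $u^\infty$, together with a codimension-two excision of the singular set and the Lipschitz bound of Theorem~\ref{thm:Lip}. These are genuine additional inputs: the paper's proof cites only Theorem~\ref{thm:asymptotics}, which gives the eigenvalue equations on each $\{u_j^\infty>0\}$ but says nothing about equipartition of normal derivatives across the interface, so your write-up is more faithful than the paper's to the actual logical dependencies and is a legitimate expansion rather than a different route. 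One small point to tighten: when you assert that the time-constant extension of $u^\infty$ is a suitable weak solution of (\ref{eq:weak}) with ``the natural surface measures $\nu_j$,'' you should make explicit that $\nu_j$ is the singular part of $-\Delta u_j^\infty$ concentrated on $\partial\{u_j^\infty>0\}$, that its nonnegativity follows from $u_j^\infty\geq 0$, and that its local finiteness follows from the Lipschitz bound; that verification is what licenses applying the machinery of Section~\ref{sec:Bdry} to $u^\infty$, and it deserves to be stated rather than waved at.
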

\begin{proof}
The conclusion is easy to check via a direct computation, in light of Theorem \ref{thm:asymptotics}. Since the variations must map into $\Sigma$, the variations do not alter the eigenvalues $\lambda_1(\Omega_j)$ in $E_\lambda$.
\end{proof}

\end{document}